\theoremstyle{plain}
\newtheorem*{theorem*}{Theorem}
\newtheorem{theorem}{Theorem}
\newtheorem{lemma}[theorem]{Lemma}
\newtheorem{proposition}[theorem]{Proposition}
\theoremstyle{definition}
\newtheorem{definition}[theorem]{Definition}
\theoremstyle{remark}
\newtheorem{remark}[theorem]{Remark}
\numberwithin{equation}{section}
\numberwithin{theorem}{section}
\renewcommand{\comment}[1]{}
\def\CC{\mathbb{C}}
\def\Y{\mathcal{Y}}
\def\F{\mathcal{F}}
\def\ZZ{\mathbb{Z}}
\DeclareMathOperator\End{End}
\DeclareMathOperator\Hom{Hom}
\DeclareMathOperator\ad{ad}
\DeclareMathOperator\Span{span}
\def\vac{{\boldsymbol{1}}}  
\def\lieg{{\mathfrak{g}}}
\def\lieh{{\mathfrak{h}}}
\begin{document}

\title[Lambda bracket and Intertwiners]{Lambda bracket and Intertwiners}
\author{Juan J. Villarreal}
\address{Department of Mathematical Sciences, 
University of Bath, 
Bath BA2 7AY,
United Kingdom}
\email{juanjos3villarreal@gmail.com}
\maketitle
\begin{abstract}We describe the intertwiners between modules of a vertex algebra using the language of lambda bracket. We apply this formalism to obtain some classical results on conformal field theory. 



\end{abstract}


\section{Introduction}

It is well known that vertex algebras \cite{Bo} can be equivalently formulated in terms of a lambda bracket, \cite{K,BK}. This equivalence extends naturally to modules of vertex algebras. 


In this work, the intertwiners between modules of a vertex algebra \cite{FHL} are equivalently formulated in terms of a lambda bracket. Our motivation is to obtain a formalism useful to do explicit calculations with intertwiners. 

In more detail,  we consider a {formal Fourier transform} defined linearly by its action on  monomials, see Section \ref{s1} for more details, as follows
\begin{equation}\label{fo}
F^{\lambda}_{z}\left({z^{-n-1}}\right)=\lambda^{(n)}:=\frac{1}{\Gamma(n+1)}\lambda^n\, ,  \qquad n\in \CC \, , 
\end{equation}
where ${1}/{\Gamma(x)}$ denotes the inverse of the Gamma function. 
This transformation naturally generalizes the formal Fourier transform in \cite{K},  see Lemmas \ref{lem0} and \ref{lem1}.

In this work, a module $M$ for a vertex algebra $V$ is assumed to have a translation operator $T^{M}\in \End(M)$, see Definition \ref{d3.11}. We will denote this as $T\in \End(M)$ by abuse of notation. 

 Let $M_1 , M_2, M_3$ be $V$-modules. For an  intertwiner $\Y$, see Definition \ref{defint}, we have for $a\in M_1, b\in M_2$ that
 \[\Y(a,z)=\sum_{n\in \CC}a_{(n)}b \/\, z^{-n-1}\,,  \, \quad a_{(n)}b\in M_3\, .\]
 
We define the  $\lambda$-bracket for intertwiners as follows 
\begin{equation}\label{fo2}[a_{\lambda}b]:=F^{\lambda}_{z}(\Y(a,z)b)\, .
\end{equation}
This bracket together with the endomorphisms $T$, Proposition \ref{pro3.2}, satisfy for $v\in V$ that 
\begin{enumerate}
\item[i)]\label{i)}  $[T a_{\lambda}b]=-\lambda [ a_{\lambda}b]$,    $\quad[ a_{\lambda}T b]=(\lambda+T) [ a_{\lambda}b]$$;$
\medskip
\item[ii)] \label{jac3} $[v_{\lambda}[a_{\mu}b]]=[a_{\mu}[v_{\lambda}b]]+\iota_{\mu, \lambda}[[v_{\lambda}a]_{\lambda+\mu}b]$.   
\vspace{0,08cm}       
\end{enumerate}

Additionally, we have a product $\cdot: M_1\otimes M_2\rightarrow M_3$ defined by
\begin{equation}\label{eqpro}a\cdot b:=a_{(-1)}b\, .\end{equation}
This product, see Proposition \ref{pro3.3}, satisfies
\begin{enumerate}
\medskip
\item[iii)] $(va)b-v(ab)=\left(\int^{T}_{0}d\lambda v\right)[a_{\lambda}b]^0+\left(\int^{T}_{0}d\lambda a\right)[v_{\lambda}b] $, 
\vspace{0,1cm}
\item[iv)] $v(ab)-a(vb)=\left(\int^{0}_{-T}[v_{\lambda}a]d\lambda\right) b$.
\medskip

\end{enumerate}
where $[a_{\lambda}b]^0:=F^{\lambda}_{z}(\sum_{n\in \ZZ}a_{(n)}b \/\, z^{-n-1})$. Finally, the $\lambda$-bracket and the product, see Proposition \ref{pro3.4}, are related as follows
\begin{enumerate}
\medskip
\item[v)] $[v_{\lambda}ab]=a[v_{\lambda}b]+[v_{\lambda}a]b+\int_{0}^{\lambda} [[v_{\lambda}a]_{\mu}b]^{0}d\mu$,
\medskip
\item[vi)] $[v  a_{\lambda}  b ]=(e^{T\partial_{\lambda}}v)[a_{\lambda}b]+(e^{T\partial_{\lambda}}a)[v_{\lambda}b]+\int_{0}^{\lambda}  [a_{\mu}[v_{\lambda-\mu}b]]d\mu $,
\medskip
\item[vii)] $[a_{\lambda}vb]=v[a_{\lambda}b]+[a_{\lambda}v]b+\int_{0}^{\lambda} [[a_{\lambda}v]_{\mu}b]d\mu$.
\medskip
\end{enumerate}
where $[a_{\lambda}v]:=-[v_{-\lambda-T}a]$, see also definition \eqref{defintegral}. We have the next result
\begin{theorem*} Let $M_1 , M_2, M_3$ be $V$-modules. A $\lambda$-bracket $[\cdot_{\lambda}\cdot]$ and a product $\cdot$ satisfying \textnormal{i)}, $\cdots$ , \textnormal{vii)} define an intertwiner.
\end{theorem*}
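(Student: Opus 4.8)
The plan is to reconstruct an intertwiner $\Y$ from the pair $([\cdot_\lambda\cdot],\,\cdot)$, to check the axioms of Definition \ref{defint}, and finally to verify that the bracket and product induced by this $\Y$ (via Propositions \ref{pro3.2}--\ref{pro3.4}) return the original data. The reconstruction is dictated by the Fourier transform \eqref{fo}: since $1/\Gamma(n+1)$ vanishes exactly when $n\in\ZZ_{<0}$, the bracket $[a_\lambda b]=\sum_{n}\frac{1}{\Gamma(n+1)}\,a_{(n)}b\,\lambda^{n}$ records only the modes with $n\notin\ZZ_{<0}$, and Lemmas \ref{lem0}, \ref{lem1} let us invert it to recover the singular part $\sum_{n\notin\ZZ_{<0}}a_{(n)}b\,z^{-n-1}$. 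The complementary regular part is forced by property i): $[Ta_\lambda b]=-\lambda[a_\lambda b]$ Fourier-transforms to $\Y(Ta,z)b=\partial_z\Y(a,z)b$, hence $(Ta)_{(m)}b=-m\,a_{(m-1)}b$ and $a_{(-k-1)}b=\frac{1}{k!}(T^{k}a)\cdot b$ for $k\geq0$, so that $\sum_{k\geq0}a_{(-k-1)}b\,z^{k}=(e^{zT}a)\cdot b$. I would therefore set
$$\Y(a,z)b:=\left(F^{\lambda}_{z}\right)^{-1}\!\big([a_\lambda b]\big)+(e^{zT}a)\cdot b,$$
which is well defined because the bracket and the product feed the disjoint mode ranges $n\notin\ZZ_{<0}$ and $n\in\ZZ_{<0}$; with this definition the induced bracket and product are tautologically the given ones, since $F^{\lambda}_{z}$ annihilates the regular part and the $z^0$-coefficient of $\Y(a,z)b$ is $a\cdot b$.

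Next I would dispatch the elementary axioms. Lower truncation for $\Y$ is inherited from the truncation built into the bracket, together with the $T$-covariance that fixes the regular tail. The identity $\Y(Ta,z)b=\partial_z\Y(a,z)b$ holds on the singular part by the first half of i) and on the regular part automatically, since $\partial_z e^{zT}=Te^{zT}$. The second identity $T\Y(a,z)b-\Y(a,z)Tb=\partial_z\Y(a,z)b$ is the Fourier transform of the second half of i) on the singular part (using $F^{\lambda}_{z}(\partial_z\,\cdot)=-\lambda F^{\lambda}_{z}(\cdot)$ from Lemma \ref{lem1}), while on the regular part it amounts to $T$ being a derivation of the product, which I would extract from the relations.

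The crux is the Jacobi identity for $\Y$ (equivalently the commutator and associativity formulas). Its purely singular component---the bracket of a state $v\in V$ with the singular part of $\Y$---is precisely property ii) read through the inverse Fourier transform, the expansion $\iota_{\mu,\lambda}$ encoding the region in which the corresponding $\delta$-function term is expanded. The remaining components involve the regular part of $\Y$, that is the product, and are supplied by v), vi), vii) together with the associator identities iii), iv): property v) governs the bracket of $v$ with a product $a\cdot b$, vi) the bracket of a product $v\cdot a\in M_1$ with $b$, and vii) the bracket of $a$ with $v\cdot b$, while iii), iv) control the purely regular (product) contributions. Crucially, each integral $\int_0^{\lambda}\cdots\,d\mu$ appearing in v)--vii) is the Fourier-transformed avatar of a $\delta$-function cross-term in the Jacobi identity, so I would match the identity group by group: pure singular against ii), mixed against v)--vii), and pure regular against iii)--iv).

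The main obstacle is this last assembly. Because the $\lambda$-bracket sees only the singular part and the Fourier transform is non-invertible on the negative-integer powers, recovering the full Jacobi identity means gluing the bracket data to the product data and verifying that the integral terms of v)--vii) reproduce exactly the required $\delta$-function cross-terms. The bookkeeping of the $\iota_{\mu,\lambda}$ expansions and of the generalized, complex-exponent modes is where the genuine difficulty lies; structurally the argument parallels Kac's reconstruction of a vertex operator from its Lie-conformal and normally ordered product data, but the three-module intertwiner setting and the indexing by $n\in\CC$ make each of the seven identities, and their reassembly, substantially heavier.
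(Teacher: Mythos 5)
Your proposal follows essentially the same route as the paper's proof of Theorem \ref{theorem3.8}: you define the singular modes from the bracket and the regular tail $a_{(-1-k)}b=(\tfrac{1}{k!}T^{k}a)\cdot b$ (equivalently $(e^{zT}a)\cdot b$) from the product, derive translation covariance from i), match the commutator and associativity identities case by case against ii)--vii) in the same index regimes the paper uses, and conclude via the Kac-style equivalence of Proposition \ref{p3.7}. The argument is correct; the only step you leave implicit --- extending the identities \eqref{eq3.12} and \eqref{eq4.3} from the base index values to all $m\in\ZZ$, $k\in\CC$ by translation covariance --- is supplied by the relation $(Ta)_{(m)}b=-m\,a_{(m-1)}b$ that you already note.
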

 For a more precise statement see Theorem \ref{theorem3.8}. We remark that besides characterized intertwiners the identities \textnormal{i)}, $\cdots$ , \textnormal{vii)}  also allow us to do calculations with intertwiners. 
 
 In Section \ref{s4}, we apply the identities above to obtain some classical results. First we consider the Virasoro algebra, we obtain some relations which goes back to \cite{BPZ} and more generally to \cite{FF,  FF1}.  Second for the integral levels of affine Kac-Moody algebras, we obtain some results which goes back to \cite{KZ, GW}.

 In future works, we will extend this work to logarithmic intertwiners \cite{Mi}, and study some of  the results in \cite{CR, CR2, Ad}.  Also, we will study an associated graded for intertwiners and related this with the $C_2$-algebra, \cite{Li3, A, A2}. 

This work is organized as follows:  In Section \ref{pre}, we introduce the definitions used in this work. In Section \ref{s1}, we prove the main results of this work,  which were briefly described above. Finally, in Section \ref{s4}, we consider some examples.

\section{Preliminars} \label{pre}

We denote by $V$ a vector space, and by $V [z]$ (respectively, $V [\![z]\!]$) the space of polynomials (respectively, formal power series) in $z$ with coefficients in $V$.

Let $\Gamma\subset \CC$ such that $\Gamma + \ZZ = \Gamma$ and $\Gamma/\ZZ$ is a finite subset of  $\CC/\ZZ$. We denote by
$V[\![z]\!]z^{-\Gamma}$ the space of infinite sums $\sum_{n}f_{n}z^{n}$, where $f_n\in V$ and $n$ runs over the union of finitely many sets of the form $\{-d_{i} + \ZZ_{\geq0}\}$ with $d_i \in  \Gamma$. In particular, $V[\![z]\!]z^{-\ZZ}$ is the space $V(\!(z)\!):=V[\![z]\!][z^{-1}]$ of formal Laurent series. 
Finally note that $V [\![z]\!]z^{-\Gamma}$ is equipped with the usual action of the derivative $\partial_{z}$. 



\begin{definition} \label{d3.11}
A vertex algebra is a vector space $V$ equipped with a vector $\vac\in V$  and a linear map 
\[Y\colon  V\to \Hom(V,V(\!(z)\!))\,, \qquad v\mapsto Y(v,z)=\sum_{n\in \ZZ}v_{(n)}z^{-n-1}\,, \]
subject to the following axioms:

\medskip
(\emph{i})\; $Y(\vac,z)=I_{V}$, \; $Y(v,z)\vac\in V[\![z]\!]$, \; $Y(v,z)\vac\big|_{z=0} = v$. 


(\emph{ii})\;  $\forall$ $u,v\in V$ and $n\in \ZZ$ 
\begin{equation}\label{b1}
\begin{split}
\iota_{z_1, z_2}z_{12}^{n}Y(v&,z_1)Y(u, z_2)-\iota_{z_2, z_1}z_{12}^{n}Y(u, z_2)Y(v,z_1)b\\
&=\sum_{i\geq 0}Y( v_{(n+i)}u_j, z_2) \partial^{(i)}_{z_2}\delta(z_1, z_2)\, .
\end{split}
\end{equation}
\end{definition}

On a vertex algebra we define $T\in \End(V)$ by $Tv=v_{(-2)}\vac$, then by definition above we have $[T,Y(v,z)] = D_z Y(v,z)$, see  \cite{K, FB}.


\begin{definition} \label{d3.11}
A $V$-module is a vector space $M$ equipped with an endomorphism $T\in \End(M)$ and a linear map 
\[Y^{M}\colon  V\to \Hom(M,M(\!(z)\!))\,, \qquad v\mapsto Y^{M}(v,z)=\sum_{n\in \ZZ}v_{(n)}z^{-n-1}\,, \]
subject to the following axioms:

\medskip
(\emph{i})\; $Y^{M}(\vac,z)=I_{M}$, $[T,Y^M(v,z)] = D_z Y^M(v,z)$;


(\emph{ii})\;  $\forall$ $u,v\in V$ and $n\in \ZZ$
\begin{equation}\label{b1}
\begin{split}
\iota_{z_1, z_2}z_{12}^{n}Y^M(v&,z_1)Y^M(u, z_2)-\iota_{z_2, z_1}z_{12}^{n}Y^M(u, z_2)Y^M(v,z_1)\\
&=\sum_{i\geq 0}Y^M( v_{(n+i)}u, z_2) \partial^{(i)}_{z_2}\delta(z_1, z_2)\, .
\end{split}
\end{equation}
\end{definition}




Additionally, we have the definition of intertwiner from \cite{FHL}


\begin{definition}\label{defint} Let $M_1 , M_2, M_3$ three $V$-modules. An intertwining operator of type $\binom{M_3}{M_1\quad M_2}$ is a linear map 
\[\Y:M_1 \rightarrow \Hom(M_2,M_3[\![z]\!]z^{-\Gamma})\,, \qquad a\mapsto \Y(a,z)=\sum_{n\in \CC}a_{(n)} \/\, z^{-n-1}\, , \]
subject to the following axioms:

\medskip

(\emph{i})\; 
$[T,\Y(a,z)]=\Y(Ta,z)  = D_z \Y(a,z)$.

(\emph{ii})\;  $\forall$ $v\in V, a\in M_1$ and $n\in \ZZ$
\begin{equation}\label{b}
\begin{split}
\iota_{z_1, z_2}z_{12}^{n}Y(v&,z_1)\Y(a, z_2)-\iota_{z_2, z_1}z_{12}^{n}\Y(a, z_2)Y(v,z_1)\\
&=\sum_{i\geq 0}\Y( v_{(n+i)}a, z_2)\partial^{(i)}_{z_2}\delta(z_1, z_2)\, .
\end{split}
\end{equation}
\end{definition}
 Note that a $V$-module $M$ defines an intertwiner $\binom{M}{V \quad  M}$. 

\begin{remark} We are using an equivalent expression of the Jacobi identity in \cite{FHL}. The formulation here will be useful to describe the $\lambda$-bracket. Additionally, the identity \eqref{b} is equivalent to ($n,m\in \ZZ$, $k\in \CC$)
\begin{equation}\label{borcherds2}
\begin{split}
\sum_{j\in \ZZ_+} &(-1)^{j}\binom{n}{j}\left(v_{(m+n-j)}a_{(k+j)} -(-1)^{n}a_{(n+k-j)} v_{(m+j)}\right) \\
& =\sum_{j\in \ZZ_+} \binom{m}{j}( v_{(n+j)}a )_{(m+k-j)} \,.
\end{split}
\end{equation}

\end{remark}


Let $U$ be a complex vector space. The \emph{formal Fourier transform} $F^{\lambda}_{z}:U(\!(z)\!)\rightarrow U[\lambda]$ is  defined linearly by its action on  monomials as follows
\begin{equation}\label{fousual}F^{\lambda}_{z}(z^{-n-1})=  \begin{cases}
                 \lambda^{(n)}:=\frac{\lambda^n}{n!}\, ,  & n\in \ZZ_{\geq 0}   \,, \\
                  0\, ,  &n\in \ZZ_{< 0} \, .
                \end{cases}   \end{equation}
For a vertex algebra $V$ the $\lambda$-bracket is given by ($u,v\in V$)
\begin{equation}
\label{lambdava}
[u_{\lambda}v]:=F^{\lambda}_{z}(Y(u,z)v)\in V[\lambda]\, .
\end{equation}   
This bracket together with the endomorphisms $T$ forms a \emph{Lie conformal algebra}, see \cite{K} for details, satisfying \emph{sesquilinearity}, \emph{skewsymmetry} and \emph{Jacobi identity}:  ($u,v,w\in V$)
\begin{enumerate}
\item[]  $[T u_{\lambda}v]=-\lambda [ u_{\lambda}v]$,    $\quad[ u_{\lambda}T v]=(\lambda+T) [ u_{\lambda}v]$$,$
\item[] $[u_{\lambda}v]=-[v_{-\lambda-T}u]$$,$
\item[] \label{jac3} $[u_{\lambda}[v_{\mu}w]]=[v_{\mu}[u_{\lambda}w]]+[[u_{\lambda}v]_{\lambda+\mu}w]$.       
\end{enumerate}

Also, we have a product $\cdot: V\otimes V\rightarrow V$ given by
\begin{equation}\label{prova}u\cdot v:=u_{(-1)}v\in V\, .\end{equation}
This product has a unit $\vac$, a differential $T$,  and it is \emph{quasicommutative} and  \emph{quasiassociative}: ($u,v,w\in V$)
\begin{enumerate}
\item[] $uv-vu=\int^{0}_{-T}[u_{\lambda}v]d\lambda  $,
\item[]  $(uv)w-v(uw)=\left(\int^{T}_{0}d\lambda v\right)[u_{\lambda}w]+\left(\int^{T}_{0}d\lambda u\right)[v_{\lambda}w] $.
\end{enumerate}

Finally, the $\lambda$-bracket and the product are related by the \emph{noncommutative Wick formula}: ($u,v,w\in V$)
\begin{enumerate}
\item[] $[u_{\lambda}vw]=v[u_{\lambda}w]+[u_{\lambda}v]w+\int_{0}^{\lambda} [[u_{\lambda}v]_{\mu}w]d\mu$.
\end{enumerate}
\begin{theorem}\cite{BK} A vertex algebra is given by quintuple $(V, \vac,T, [\cdot_{\lambda}\cdot ], \cdot)$ satisfying the properties 
\begin{enumerate}
\item $(V, T, [\cdot_{\lambda}\cdot ])$ is a Lie conformal algebra.
\item $(V, \vac,T,[\cdot_{\lambda}\cdot ], \cdot)$ quasicommutative, quasiassociative unital diff algebra
\item $(V, [\cdot_{\lambda}\cdot ], \cdot)$ satisfies the noncommutative Wick formula.
\end{enumerate}
\end{theorem}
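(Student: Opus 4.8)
The statement is an equivalence, and one direction is immediate: the formulas recalled in \eqref{lambdava}--\eqref{prova} together with the displayed sesquilinearity, skewsymmetry, Jacobi, quasicommutativity, quasiassociativity and noncommutative Wick identities are exactly properties \textnormal{1}--\textnormal{3}, so every vertex algebra yields such a quintuple. The substance is the converse: from a quintuple $(V,\vac,T,[\cdot_\lambda\cdot],\cdot)$ satisfying \textnormal{1}--\textnormal{3} one must reconstruct a state--field correspondence $Y$ obeying the axioms of Definition \ref{d3.11}. The plan is to write $Y(u,z)$ explicitly in terms of the given data, dispatch the vacuum and translation--covariance axioms by direct inspection, and then reduce the Jacobi identity \eqref{b1} to \textnormal{1}--\textnormal{3}.

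First I would reconstruct the fields. The $\lambda$-bracket encodes the non-negative modes through $[u_\lambda v]=\sum_{n\ge 0}(u_{(n)}v)\lambda^{(n)}$, while the product together with $T$ encodes the negative modes via the relation $u_{(-n-1)}v=\tfrac1{n!}(T^{n}u)\cdot v$, which is forced by sesquilinearity. Accordingly I would set
\begin{equation*}
Y(u,z)v:=\sum_{n\ge 0}\frac{z^{n}}{n!}\,(T^{n}u)\cdot v\;+\;\sum_{n\ge 0}(u_{(n)}v)\,z^{-n-1},
\end{equation*}
reading off the coefficients $u_{(n)}v$ of the singular part from $[u_\lambda v]$. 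Since $[u_\lambda v]\in V[\lambda]$ is polynomial, the singular part has only finitely many terms, so $Y(u,z)v\in V(\!(z)\!)$ and $Y$ is a genuine field. The vacuum axioms $Y(\vac,z)=I_V$ and $Y(v,z)\vac=e^{zT}v\in V[\![z]\!]$ with value $v$ at $z=0$ then follow from $\vac$ being a unit for $\cdot$, from $T\vac=0$, and from the centrality $[\vac_\lambda v]=0$ of the unit. Translation covariance $Y(Tu,z)=\partial_z Y(u,z)=[T,Y(u,z)]$ follows from the two sesquilinearity relations, which give $(Tu)_{(n)}v=-n\,u_{(n-1)}v$ in the singular part, together with the Leibniz rule $T(u\cdot v)=(Tu)\cdot v+u\cdot(Tv)$ governing the regular part; both are routine once the reconstruction formula is fixed.

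The main work, and the expected obstacle, is the Jacobi identity \eqref{b1}, equivalently the Borcherds identities \eqref{borcherds2} for all $m,n,k\in\ZZ$. I would apply the $F^{\lambda}_{z}$-dictionary to both sides and separate the contributions according to the signs of the mode indices, matching each block to a hypothesis: the part involving only non-negative products is precisely the Lie conformal Jacobi identity of \textnormal{1}; the part expressing a non-negative mode acting on a product is the noncommutative Wick formula of \textnormal{3}; and the part relating iterated products is quasiassociativity, with the exchange symmetry supplied by skewsymmetry and quasicommutativity from \textnormal{2}. The difficulty is the bookkeeping: for negative exponents the two terms on the left of \eqref{b1} use the opposite expansions $\iota_{z_1,z_2}$ and $\iota_{z_2,z_1}$ of $z_{12}^{n}$, and it is exactly the difference of these expansions that produces the integral corrections $\int_0^\lambda$ and $\int_0^T$ appearing in the Wick and quasiassociativity identities; arranging the computation so that every instance of \eqref{borcherds2} is accounted for exactly once is the delicate step.

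An alternative and perhaps cleaner route avoids the full case analysis: prove that the reconstructed fields $Y(u,z)$ are mutually local --- locality being controlled entirely by the singular part, hence by the Lie conformal structure of \textnormal{1} --- and then invoke the existence theorem for vertex algebras to produce a vertex algebra structure on $V$ with these fields and with $Y(v,z)\vac=e^{zT}v$. It then remains only to check that the normally ordered products produced by that theorem coincide with $\cdot$, which once more reduces to quasiassociativity and the noncommutative Wick formula of \textnormal{2} and \textnormal{3}. Either way the crux is the reconciliation of the two expansions of the delta distribution with the integral terms built into the hypotheses.
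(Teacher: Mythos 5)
Your skeleton for the converse is the right one, and it is the route of \cite{BK} (note the paper itself does not reprove this theorem; it cites \cite{BK}, and the only in-paper model for the argument is the proof of Theorem \ref{theorem3.8}, which runs the same scheme for intertwiners): define $u_{(n)}v$ for $n\ge 0$ as the coefficients of $[u_\lambda v]$ and set $u_{(-1-n)}v:=\bigl(\tfrac{1}{n!}T^{n}u\bigr)\cdot v$, check the vacuum axioms and translation covariance, then reduce the Jacobi identity \eqref{b1} to the hypotheses. The genuine gap is exactly at the step you flag as ``delicate bookkeeping'' and then leave undone. The hypotheses encode the Borcherds identities \eqref{borcherds2} only at special index values: the conformal Jacobi identity and the Wick-type formulas give $n=0$ with $m\in\ZZ_{\ge0}$ or $m=-1$, and $n=-1$ with $m=0$, for $k\ge-1$ (compare \eqref{eq4.1}, \eqref{eq3.4}, \eqref{eq3.6b}, \eqref{eq3.3ref}, \eqref{eq3.4ref}, \eqref{eq3.5}). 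A direct ``separate the contributions by sign of the mode indices'' plan therefore leaves infinitely many instances of \eqref{borcherds2} (say $n=2$, $k=-3$) matched to no hypothesis. What makes the verification finite is the missing key lemma, Proposition \ref{p3.7} (i.e.\ \cite[Prop.~4.8]{K}): the full Jacobi identity is equivalent to just two identities, the commutator formula ($n=0$) and the $(-1)$st-product formula ($n=-1$, $m=0$), combined with a translation-covariance bootstrap that propagates the identities \eqref{eq3.12} and \eqref{eq4.3} from the special values $m,k\in\{-1\}\cup\ZZ_{\ge0}$ supplied by the axioms to all $m,k\in\ZZ$, exactly as in the proof of Theorem \ref{theorem3.8}. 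Without naming and using this reduction, the core of the converse is asserted rather than proved.

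Your fallback route also contains an error: the claim that locality of the reconstructed fields is ``controlled entirely by the singular part, hence by the Lie conformal structure of (1)'' is false. Locality is a statement about the full fields; the commutator $[Y(u,z),Y(v,w)]$ involves the negative modes of both, and a Lie conformal algebra only governs the brackets among the modes $u_{(n)}$ with $n\ge 0$. Establishing locality from the quintuple requires precisely the noncommutative Wick formula and quasicommutativity/quasiassociativity, i.e.\ hypotheses (2) and (3) — it is essentially equivalent to the commutator formula that the reduction above isolates, so this route is not a shortcut. A further small gap: $[\vac_\lambda v]=0$ is not among hypotheses (1)--(3) and must be derived before your vacuum-axiom check goes through (e.g.\ from the Wick formula with both product factors equal to $\vac$ and induction on the $\lambda$-degree, using that $\vac$ is a unit and $T\vac=0$).
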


Now, for a $V$-module $M$ the $\lambda$-bracket is given by ($u\in V, a\in M$)
\begin{equation}\label{lambdam}
[v_{\lambda}a]:=F^{\lambda}_{z}(Y(u,z)a)\in M[\lambda]\, .
\end{equation}     
This bracket together with the endomorphisms $T$ forms a \emph{Lie conformal module} ($u,v\in V, a\in M$)
\begin{enumerate}
\item[] $[T u_{\lambda}a]=-\lambda [ u_{\lambda}a]$,    $\quad[ u_{\lambda}T v]=(\lambda+T) [ u_{\lambda}v]$$,$
\item[] \label{jac3} $[u_{\lambda}[v_{\mu}a]]=[v_{\mu}[u_{\lambda}a]]+[[u_{\lambda}v]_{\lambda+\mu}a]$.            
\end{enumerate}

We have the product $\cdot: V\otimes M\rightarrow M$ given by
\[u\cdot m:=u_{(-1)}m\in M\, .\]
This product is a representation of representation of the quasicommutative, quasiassociative unital diff algebra \eqref{prova} i.e.  $\vac\cdot =\text{Id}_{M}$, $T$ is a differential and  ($u,v\in V, a\in M$)
\begin{enumerate}
\item[] $(uv)a-u(va)=\left(\int^{T}_{0}d\lambda v\right)[u_{\lambda}a]+\left(\int^{T}_{0}d\lambda u\right)[v_{\lambda}a] ,$
\item[] $u(va)-v(ua)=\int^{0}_{-T}[u_{\lambda}v]d\lambda a$.
\end{enumerate}
Finally, the $\lambda$-bracket and the product are related by 
\begin{enumerate}
\item[] $[u_{\lambda}va]=v[u_{\lambda}a]+[u_{\lambda}v]a+\int_{0}^{\lambda} [[u_{\lambda}v]_{\mu}a]d\mu$,
\item[]  $[u  v_{\lambda}  a ]=(e^{T\partial_{\lambda}}u)[v_{\lambda}a]+(e^{T\partial_{\lambda}}v)[u_{\lambda}a]+\int_{0}^{\lambda}  [u_{\mu}[v_{\lambda-\mu}a]]d\mu $.

\end{enumerate}

\begin{proposition} A $V$-module is given by triple $(M, [\cdot_{\lambda}\cdot ], \cdot)$ satisfying
\begin{enumerate}
\item $(M, [\cdot_{\lambda}\cdot ])$ is a Lie conformal module.
\item $(M,  \cdot)$ is a representation of the quasicommutative, quasiassociative unital diff algebra.
\item $(M, [\cdot_{\lambda}\cdot ], \cdot)$ satisfies the noncommutative Wick formulas.\\\
\end{enumerate}
\end{proposition}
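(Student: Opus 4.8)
The plan is to prove the equivalence in both directions, in close analogy with the Bakalov--Kac theorem recalled above for the vertex-algebra case. In the direction \emph{module} $\Rightarrow$ \emph{triple}, the three conditions are obtained by applying the formal Fourier transform $F^{\lambda}_{z}$ to the Jacobi identity \eqref{b1} of the module and sorting the resulting relations by the sign of the modes involved: one recovers exactly the Lie conformal module axioms, the two representation identities, and the two Wick formulas displayed immediately before the statement. This is verified precisely as in the vertex-algebra computation that turns the Jacobi identity into the Lie conformal axioms, quasi-commutativity/quasi-associativity, and the noncommutative Wick formula.

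For the converse, assume $(M,[\cdot_{\lambda}\cdot],\cdot)$ satisfies \textnormal{(1)--(3)}. The first step is to reconstruct the field. For $u\in V$ and $a\in M$, I read off the non-negative modes from the bracket via $[u_{\lambda}a]=\sum_{n\ge0}\lambda^{(n)}u_{(n)}a$ and define the creation modes from the product, setting
\begin{equation*}
Y^{M}(u,z)a:=\sum_{n\ge0}\big(u_{(n)}a\big)z^{-n-1}+(e^{zT}u)\cdot a,\qquad (e^{zT}u)\cdot a=\sum_{n\ge0}\tfrac{z^{n}}{n!}(T^{n}u)\cdot a.
\end{equation*}
Here the $(-1)$-st mode is the product $u\cdot a$, and this regular part is the unique choice compatible with the requirement $[T,Y^{M}(u,z)]=D_{z}Y^{M}(u,z)$.

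It then remains to check the axioms of a $V$-module. The vacuum axiom $Y^{M}(\vac,z)=I_{M}$ holds because $T\vac=0$ and $\vac\cdot a=a$ make the regular part equal to $a$, while the singular part vanishes since $[\vac_{\lambda}a]=0$; the latter is the normalization forced by $[\vac_{\lambda}v]=0$ in $V$ together with the Wick formula in \textnormal{(3)}. Translation covariance $[T,Y^{M}(v,z)]=D_{z}Y^{M}(v,z)$ follows from the sesquilinearity in \textnormal{(1)} on the singular part and from $T$ being a differential of the product on the regular part. The remaining axiom, the Jacobi identity \eqref{b1}, is the main obstacle.

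To establish it I would again mirror the Bakalov--Kac argument, working with the equivalent Borcherds form \eqref{borcherds2}, now with $k\in\ZZ$ since $v\in V$ acts with integral modes. Expanding both sides and separating every occurrence of a $V$-mode into an annihilation part ($n\ge0$), governed by the bracket, and a creation part ($n<0$), governed by the product, splits the identity into three families. The family in which both $V$-elements act by annihilation operators reduces to the Jacobi identity of the Lie conformal module, condition \textnormal{(1)}; the family coming from the creation operators reduces to the representation property of $(M,\cdot)$, condition \textnormal{(2)}; and the mixed family is controlled by the two noncommutative Wick formulas, condition \textnormal{(3)}. The delicate point, where all three hypotheses enter at once, is the combinatorial bookkeeping---matching the binomial coefficients and the $\iota_{z_1,z_2}$-expansions---that reassembles these three families into the single identity \eqref{b1}; this is routine given the corresponding vertex-algebra computation, but must be carried out with the module element kept in the third slot throughout.
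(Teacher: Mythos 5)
Your proposal is correct and follows essentially the same route as the paper: the paper states this Proposition without a separate proof, deferring to \cite{BK}, and its proof of the intertwiner analogue (Theorem \ref{theorem3.8}) is exactly your argument specialized to type $\binom{M}{V \quad M}$ --- reconstruct the field with singular part from the bracket and regular part $(e^{zT}u)\cdot a$, verify the Borcherds identity \eqref{borcherds2} in the special cases supplied by hypotheses (1)--(3), extend by translation covariance, and reassemble. Your ``routine bookkeeping'' step is precisely the paper's final reduction via Proposition \ref{p3.7} (the commutator formula together with the $(-1)$st-product identity imply \eqref{b1}), so your outline matches the paper's argument step for step.
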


\section{Lambda bracket formalisms}\label{s1}

Let $U$ be a complex vector space. Now the general \emph{formal Fourier transform} $F^{\lambda}_{z}:U[\![z]\!]z^{-\Gamma}\rightarrow U[\![\lambda^{-1}]\!]\lambda^{\Gamma}$ is defined linearly by its action on the monomials as in \eqref{fo}. 
By definition we have that
\begin{lemma}\label{lem0}The map $F^{\lambda}_{z}$ satisfies 
\begin{enumerate}
\item $F^{\lambda}_z$ restricted to $U(\!(z)\!)$ gives us the formal Fourier transform \eqref{fousual};
\item \label{ii)}We have that $(n+1)\lambda^{(n+1)}=\lambda^{(n)}\lambda$ for $n\in \CC$;
\item We have $\iota_{\mu,\lambda}(\lambda+\mu)^{(n)}=\sum_{k\geq 0} \lambda^{(k)}\mu^{(n-k)}$ for $n\in\CC$.
\end{enumerate}
\end{lemma}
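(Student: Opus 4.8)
The plan is to reduce all three statements to elementary properties of the Gamma function, handling the three parts in turn. Throughout I would rely on two facts: the functional equation $\Gamma(x+1)=x\,\Gamma(x)$, valid wherever both sides are defined, and the fact that $1/\Gamma$ is an entire function vanishing precisely at the non-positive integers. Since $F^{\lambda}_z$ is defined linearly on monomials, in each case it suffices to work with the value $F^{\lambda}_z(z^{-n-1})=\lambda^{(n)}=\lambda^n/\Gamma(n+1)$.

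For part (1), I would specialize to $n\in\ZZ$. When $n\in\ZZ_{\geq 0}$ one has $\Gamma(n+1)=n!$, so $\lambda^{(n)}=\lambda^n/n!$ agrees with \eqref{fousual}; when $n\in\ZZ_{<0}$ the argument $n+1$ is a non-positive integer, whence $1/\Gamma(n+1)=0$ and $F^{\lambda}_z(z^{-n-1})=0$, again matching \eqref{fousual}. For part (2), I would write $\lambda^{(n+1)}=\lambda^{n+1}/\Gamma(n+2)$ and apply the functional equation as $\Gamma(n+2)=(n+1)\Gamma(n+1)$; the factor $(n+1)$ cancels, leaving $(n+1)\lambda^{(n+1)}=\lambda^{n+1}/\Gamma(n+1)=\lambda^{(n)}\lambda$. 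Neither computation presents any real difficulty.

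The substantive part is (3). Here I would first recall that $\iota_{\mu,\lambda}$ denotes the formal binomial expansion in nonnegative powers of $\lambda$, so that
\[
\iota_{\mu,\lambda}(\lambda+\mu)^n=\sum_{k\geq 0}\binom{n}{k}\lambda^k\mu^{n-k},\qquad \binom{n}{k}=\frac{n(n-1)\cdots(n-k+1)}{k!}.
\]
The key algebraic step is to identify the generalized binomial coefficient with a ratio of Gamma values: iterating $\Gamma(n+1)=n\,\Gamma(n)$ gives $\Gamma(n+1)/\Gamma(n-k+1)=n(n-1)\cdots(n-k+1)$, hence $\binom{n}{k}=\Gamma(n+1)/\bigl(\Gamma(k+1)\Gamma(n-k+1)\bigr)$. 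Applying $F$ to $(\lambda+\mu)^n$ amounts to dividing the expansion by $\Gamma(n+1)$, which turns the coefficient of $\lambda^k\mu^{n-k}$ into $1/\bigl(\Gamma(k+1)\Gamma(n-k+1)\bigr)$; since this equals $\lambda^{(k)}\mu^{(n-k)}/(\lambda^k\mu^{n-k})$, a term-by-term comparison yields $\iota_{\mu,\lambda}(\lambda+\mu)^{(n)}=\sum_{k\geq 0}\lambda^{(k)}\mu^{(n-k)}$.

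The one point needing care — and the step I expect to be the main obstacle — is the degenerate case in which $n-k+1$ is a non-positive integer (equivalently $n\in\ZZ_{\geq 0}$ with $k>n$), where $\Gamma(n-k+1)$ has a pole and $1/\Gamma(n-k+1)=0$. I would check that there the ordinary binomial coefficient $\binom{n}{k}$ also vanishes, because the product $n(n-1)\cdots(n-k+1)$ then contains the factor $0$; thus the identity $\binom{n}{k}=\Gamma(n+1)/\bigl(\Gamma(k+1)\Gamma(n-k+1)\bigr)$ persists as $0=0$ and the coefficient matching is unaffected. Once this bookkeeping is in place, part (3) is complete.
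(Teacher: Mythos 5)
Your parts (1) and (2) are correct and coincide with the paper's proof (the paper phrases (2) as $\tfrac{1}{\Gamma(n)}=\tfrac{n}{\Gamma(n+1)}$ for all $n\in\CC$; note that at $n\in\ZZ_{\leq -1}$ your cancellation via $\Gamma(n+2)=(n+1)\Gamma(n+1)$ should really be read as that identity for the entire function $1/\Gamma$, since $\Gamma$ itself is undefined there --- a cosmetic point). For part (3) your overall route is also the paper's: expand $\iota_{\mu,\lambda}(\lambda+\mu)^{n}$ binomially and use $\binom{n}{k}=\frac{\Gamma(n+1)}{\Gamma(k+1)\Gamma(n-k+1)}$. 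But your case analysis contains a concrete error. You assert that ``$n-k+1$ a non-positive integer'' is \emph{equivalent} to $n\in\ZZ_{\geq 0}$ with $k>n$; it also occurs for every $n\in\ZZ_{<0}$ and every $k\geq 0$, and in that range your patch fails: the product $n(n-1)\cdots(n-k+1)$ contains no zero factor, so $\binom{n}{k}\neq 0$ (e.g.\ $\binom{-1}{k}=(-1)^{k}$), while $\Gamma(n+1)$ is itself undefined at such $n$, so the identity cannot ``persist as $0=0$'' there. As written, your term-by-term coefficient matching is unjustified precisely on $n\in\ZZ_{<0}$, a case the lemma includes.

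The gap is easily closed, and the paper closes it in exactly this way: for $n\in\ZZ_{<0}$ the identity in (3) is trivial, because $1/\Gamma(n+1)=0$ makes the left-hand side $(\lambda+\mu)^{(n)}$ vanish identically, while on the right $n-k\in\ZZ_{<0}$ for every $k\geq 0$, so each $\mu^{(n-k)}=0$ as well. The Gamma--binomial identity should therefore be invoked only for $n\in\CC-\ZZ_{<0}$ (where your check of the degenerate subcase $n\in\ZZ_{\geq 0}$, $k>n$ is correct and is genuinely needed). With that one extra case separated out, your argument matches the paper's proof.
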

\begin{proof} $(1)$ follows from $\frac{1}{\Gamma(n+1)}=\frac{1}{n!}$ if $n\in \ZZ_{\geq 0}$. And $\frac{1}{\Gamma(n+1)}=0$ if and only if $n\in \ZZ_{< 0}$. $(2)$ follows from $\frac{1}{\Gamma(n)}=\frac{n}{\Gamma(n+1)}$ for all $n\in \CC$. Finally $(3)$ follows from $\binom{n}{j}=\frac{\Gamma(n+1)}{j!\Gamma(n-j+1)}$ if $n\in \CC-\ZZ_{<0}$, and if $n\in \ZZ_{<0}$ the identity is trivial.
\end{proof}

We describe now some properties of the Fourier transform

\begin{lemma}\label{lem1} The map $F^{\lambda}_{z}$ satisfies 
\begin{enumerate}
\item $F^{\lambda}_{z}  z=\partial_{\lambda}F^{\lambda}_{z}$;
\item $F^{\lambda}_{z} \partial_{z}=-\lambda F^{\lambda}_{z}$;
\item $F^{\lambda}_{z} e^{zT}= F^{\lambda+T}_{z}$;
\item $F^{\lambda}_{z}\left(\iota_{z,w}(z-w)^{-n-1}\right)=e^{\lambda w}\lambda^{(n)}$, $n\in \CC$;
\item $F^{\mu}_{w}F^{\lambda}_{z}\left(w^{-m-1}\iota_{z,w}(z-w)^{-n-1}\right)=\lambda^{(n)}\iota_{\mu,\lambda}(\lambda+\mu)^{(m)}$, $n,m\in \CC$.
\end{enumerate}
\end{lemma}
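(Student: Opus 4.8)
The plan is to check each identity by evaluating both sides on the monomials $z^{-n-1}$, $n\in\CC$, which span $U[\![z]\!]z^{-\Gamma}$, and to reduce everything to the elementary Gamma-function relations of Lemma \ref{lem0}. Identities (1) and (2) are immediate. Since $z\cdot z^{-n-1}=z^{-(n-1)-1}$, the left side of (1) is $\lambda^{(n-1)}$, while the right side is $\partial_\lambda\lambda^{(n)}$; these agree because $\partial_\lambda\lambda^{(n)}=\lambda^{(n-1)}$, which is a restatement of Lemma \ref{lem0}(2). Similarly $\partial_z z^{-n-1}=(-n-1)z^{-(n+1)-1}$, so the left side of (2) equals $(-n-1)\lambda^{(n+1)}$, and Lemma \ref{lem0}(2) rewrites this as $-\lambda\,\lambda^{(n)}=-\lambda\,F^\lambda_z(z^{-n-1})$.

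For (3) I would expand $e^{zT}=\sum_{k\ge 0}T^{k}z^{k}/k!$ and apply it to $z^{-n-1}$, obtaining $\sum_{k\ge 0}\frac{T^k}{k!}z^{-(n-k)-1}$; applying $F^\lambda_z$ termwise then gives $\sum_{k\ge 0}T^{(k)}\lambda^{(n-k)}$. On the other hand $F^{\lambda+T}_z(z^{-n-1})=(\lambda+T)^{(n)}$, and expanding this in nonnegative powers of $T$ produces exactly the same series by the binomial expansion of Lemma \ref{lem0}(3) (applied with the distinguished variable being $T$).

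For (4) the efficient route is to note that the $\iota_{z,w}$-prescription is the Taylor expansion in $w$, i.e. $\iota_{z,w}(z-w)^{-n-1}=e^{-w\partial_z}z^{-n-1}$. Iterating (2) gives $F^\lambda_z\partial_z^{\,j}=(-\lambda)^jF^\lambda_z$, hence $F^\lambda_z e^{-w\partial_z}=e^{w\lambda}F^\lambda_z$, and therefore the left side of (4) is $e^{\lambda w}\lambda^{(n)}$. Identity (5) is then obtained by iterating the transforms: applying $F^\lambda_z$ first and using (4) reduces the argument to $w^{-m-1}e^{\lambda w}\lambda^{(n)}$; applying $F^\mu_w$ and invoking (3) with $T$ replaced by the commuting variable $\lambda$ gives $F^\mu_w(e^{w\lambda}w^{-m-1})=F^{\mu+\lambda}_w(w^{-m-1})=\iota_{\mu,\lambda}(\lambda+\mu)^{(m)}$, while the scalar $\lambda^{(n)}$ factors through.

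The step requiring the most care is the bookkeeping of expansion directions. The series for $e^{zT}$ in (3) and for $e^{w\lambda}$ in (5) commit one to expanding in nonnegative powers of one distinguished variable, and it is precisely this choice that produces the prescription $\iota_{\mu,\lambda}$ on the right side of (5) rather than $\iota_{\lambda,\mu}$; one must confirm that these conventions are consistent with the definition of $F^\lambda_z$ on $U[\![z]\!]z^{-\Gamma}$ and that every intermediate series lands in the completion $U[\![\lambda^{-1}]\!]\lambda^{\Gamma}$, so that the termwise application of the transform and the use of Lemma \ref{lem0}(3) are legitimate.
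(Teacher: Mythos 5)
Your proof is correct and follows essentially the same route as the paper: both verify each identity on the monomials $z^{-n-1}$, reduce everything to the Gamma-function identities of Lemma \ref{lem0} (with the stated convention $(\lambda+T)^{(n)}=\sum_{k\geq 0}\lambda^{(n-k)}T^{(k)}$ for (3)), and derive (5) by applying $F^{\lambda}_{z}$ first and then using (4). The only cosmetic difference is in (4), where you package the argument as the operator identity $F^{\lambda}_{z}\,e^{-w\partial_{z}}=e^{\lambda w}F^{\lambda}_{z}$ deduced by iterating (2), while the paper expands $\iota_{z,w}(z-w)^{-n-1}$ binomially and resums via $\binom{n+j}{j}\lambda^{(n+j)}=\lambda^{(n)}\lambda^{j}/j!$ --- the same computation written differently.
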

In $(3)$ we assume that $\lambda+T$ is expanded  in positive powers of the endomorphisms $T$  i.e. for $n\in \CC$ we have $(\lambda+T)^{(n)}=\sum_{k\geq 0}\lambda^{(n-k)}T^{(k)}$.
\begin{proof} (1) follows from $F^{\lambda}_{z}(zz^{-n-1})=\lambda^{(n-1)}=\partial_{\lambda}\lambda^{(n)}$.  (2) follows from
 $F^{\lambda}_{z}\left( \partial_z z^{-n-1}\right)=-(n+1)\lambda^{(n+1)}={-\lambda}\lambda^{(n)}$.  (3) follows from the identity $F^{\lambda}_{z}e^{zT}=e^{T\partial_{\lambda}}F^{\lambda}_{z}=F^{\lambda+T}_{z}$.
 (4) for $n\in \ZZ_{<0}$ the identity is obvious, for $n\in \CC- \ZZ_{<0}$ we have 
  \begin{align*} 
F^{\lambda}_{z}\left(\iota_{z,w}(z-w)^{-n-1}  \right)&=F^{\lambda}_{z}\sum_{j\geq 0 }(-1)^{j}\binom{-n-1}{j}z^{-j-n-1}{w^{j}}\\
&=\sum_{j\geq 0}\binom{n+j}{j}{w^{j}}\lambda^{(n+j)}=\lambda^{(n)}e^{\lambda w}\, .
\end{align*} 
Finally, (5) follows using $(4)$ 
\begin{align*}
F^{\mu}_{w}F^{\lambda}_{z}&\left(w^{-m-1}\iota_{z,w}{(z-w)^{-n-1}} \right)=F^{\mu}_{w}\left(w^{-m-1}\lambda^{(n)}e^{\lambda w}\right)\\
&=\sum_{k\geq 0} \lambda^{(n)}\lambda^{(k)}\mu^{(m-k)}=\lambda^{(n)}\iota_{\mu,\lambda}(\lambda+\mu)^{(m)}\, .
\end{align*}
 \end{proof}

Now, as we mentioned in the introduction, we define the following $\lambda$-bracket\footnote{We choose the notation $[a_{\lambda}b]$ instead of $a_{\lambda}b$ because in future works we will consider algebraic relations generalizing the notion of vertex algebras, \cite{DL, BK2, BV}.} for  intertwiners
\begin{equation*}\label{lambdaint}
[a_{\lambda}b]:=F^{\lambda}_{z}(\Y(a, z)b)\in M_3[\![\lambda^{-1}]\!]\lambda^{\Gamma}\, .
\end{equation*}


We have the following properties state on the introduction

\begin{proposition}\label{pro3.2} The $\lambda$-bracket for intertwiners satisfies $\textnormal{i)}$ and $\textnormal{ii)}$

\end{proposition}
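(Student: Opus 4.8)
The plan is to obtain both identities by transporting the corresponding axioms for the intertwiner $\Y$ through the Fourier transform $F^{\lambda}_{z}$, using the computational rules collected in Lemma \ref{lem1}.

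For \textnormal{i)} (sesquilinearity) I would start from the translation axiom (\emph{i}) of Definition \ref{defint}, which gives $\Y(Ta,z)b=\partial_{z}\bigl(\Y(a,z)b\bigr)$ and, upon rewriting $[T,\Y(a,z)]=\partial_{z}\Y(a,z)$, also $\Y(a,z)(Tb)=T\,\Y(a,z)b-\partial_{z}\bigl(\Y(a,z)b\bigr)$. Applying $F^{\lambda}_{z}$ and using Lemma \ref{lem1}(2), namely $F^{\lambda}_{z}\partial_{z}=-\lambda F^{\lambda}_{z}$, together with the fact that the operator $T$ on $M_3$ commutes with $F^{\lambda}_{z}$ (which only acts on the variable $z$), yields $[Ta_{\lambda}b]=-\lambda[a_{\lambda}b]$ and $[a_{\lambda}Tb]=T[a_{\lambda}b]+\lambda[a_{\lambda}b]=(\lambda+T)[a_{\lambda}b]$. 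This part is routine.

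For \textnormal{ii)} (Jacobi) I would specialize the axiom \eqref{b} to $n=0$, so that the factor $z_{12}^{0}=1$ removes every expansion ambiguity on the left-hand side, and then apply $F^{\mu}_{z_2}F^{\lambda}_{z_1}$ to both sides evaluated on $b$. On the left, since $F^{\lambda}_{z_1}$ only sees $z_1$ and $F^{\mu}_{z_2}$ only sees $z_2$, the two summands become $F^{\lambda}_{z_1}\bigl(Y(v,z_1)[a_{\mu}b]\bigr)=[v_{\lambda}[a_{\mu}b]]$ and $F^{\mu}_{z_2}\bigl(\Y(a,z_2)[v_{\lambda}b]\bigr)=[a_{\mu}[v_{\lambda}b]]$, so the left side is exactly $[v_{\lambda}[a_{\mu}b]]-[a_{\mu}[v_{\lambda}b]]$.

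The main work, and the step I expect to be the chief obstacle, is the right-hand side. I would first write the divided delta-derivative as $\partial_{z_2}^{(i)}\delta(z_1,z_2)=\iota_{z_1,z_2}(z_1-z_2)^{-i-1}-\iota_{z_2,z_1}(z_1-z_2)^{-i-1}$. Applying $F^{\lambda}_{z_1}$, the second term is a series in nonnegative powers of $z_1$ and is therefore annihilated, while Lemma \ref{lem1}(4) converts the first term into $e^{\lambda z_2}\lambda^{(i)}$; thus the right side becomes $\sum_{i\geq0}\lambda^{(i)}\,F^{\mu}_{z_2}\bigl(e^{\lambda z_2}\Y(v_{(i)}a,z_2)b\bigr)$. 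Now applying $F^{\mu}_{z_2}$ and invoking Lemma \ref{lem1}(3) with the scalar $\lambda$ in place of $T$, the factor $e^{\lambda z_2}$ shifts $\mu\mapsto\mu+\lambda$ with the shift expanded in nonnegative powers of $\lambda$, which is precisely the ordering $\iota_{\mu,\lambda}$, producing $\iota_{\mu,\lambda}[(v_{(i)}a)_{\lambda+\mu}b]$. Recognizing $\sum_{i\geq0}(v_{(i)}a)\lambda^{(i)}=[v_{\lambda}a]$, the right side collapses to $\iota_{\mu,\lambda}[[v_{\lambda}a]_{\lambda+\mu}b]$, giving \textnormal{ii)}. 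The delicate points are the bookkeeping of which $\iota$-expansion survives after $F^{\lambda}_{z_1}$ and the precise origin of the ordering symbol $\iota_{\mu,\lambda}$, which is forced by the expansion convention of Lemma \ref{lem1}(3); the choice to apply $F^{\lambda}_{z_1}$ before $F^{\mu}_{z_2}$ is what fixes $\iota_{\mu,\lambda}$ rather than the opposite ordering.
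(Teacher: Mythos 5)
Your argument is correct and takes essentially the same route as the paper: part i) is handled identically (Definition \ref{defint}(\emph{i}) together with $F^{\lambda}_{z}\partial_{z}=-\lambda F^{\lambda}_{z}$ from Lemma \ref{lem1}(2)), and for ii) the paper simply asserts that the bracket identity is the generating-function form of \eqref{borcherds2} at $n=0$, $m\in\ZZ_{\geq 0}$, $k\in\CC-\ZZ_{<0}$. Your double Fourier transform of axiom \eqref{b} at $n=0$, with the delta-function split and the $\iota_{\mu,\lambda}$ ordering forced by Lemma \ref{lem1}(3)--(5) (equivalently Lemma \ref{lem0}(3)), is precisely the coefficient-matching computation behind that one-line equivalence, so you have merely written out the details the paper compresses.
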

\begin{proof} i) follows from Definition \ref{defint} $(i)$, Lemma \ref{lem1}(2) and because the operator $T$ defines a derivation. 
 ii) is equivalent to identity \eqref{borcherds2} for $n=0$, $m\in \ZZ_{\geq 0}$ and $k\in \CC-\ZZ_{<0}$  
\begin{equation}\label{eq4.1}
\begin{split}
v_{(m)}a_{(k)}b -a_{(k)} v_{(m)}b =\sum_{j\in \ZZ_+} \binom{m}{j}( v_{(j)}a )_{(m+k-j)} b\,.
\end{split}
\end{equation}





\end{proof}

 Now the product \eqref{eqpro} satisfies  by Definition \ref{defint} that
\[(\frac{1}{k!}T^ka)\cdot b = a_{(-1-k)}b\,\quad \text{and}\quad T(a\cdot b)=Ta\cdot b+a\cdot Tb\, .\]


And the product satisfies the properties state on the introduction

\begin{proposition}\label{pro3.3} The product of intertwiners satisfies $\textnormal{iii)}$ and $\textnormal{iv)}$ 
\end{proposition}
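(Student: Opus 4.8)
The plan is to reduce both \textnormal{iii)} and \textnormal{iv)} to single specializations of the Borcherds-type identity \eqref{borcherds2}, after unwinding the integral notation into mode sums. First I would record the dictionary. Using the sesquilinearity noted just before the proposition, $(\frac{1}{k!}T^k a)\cdot b = a_{(-1-k)}b$, together with its $V$-analogue $(\frac{1}{k!}T^k v)_{(-1)} = v_{(-1-k)}$, one computes
\[\left(\int_0^T d\lambda\, v\right)[a_\lambda b]^0 = \sum_{n\in\ZZ_{\geq 0}} v_{(-n-2)}(a_{(n)}b),\qquad \left(\int_0^T d\lambda\, a\right)[v_\lambda b] = \sum_{j\geq 0} a_{(-j-2)}(v_{(j)}b),\]
and, matching the sign of $\binom{-1}{j}=(-1)^j$ against the orientation $\int_{-T}^0$,
\[\left(\int_{-T}^0 [v_\lambda a]\, d\lambda\right) b = \sum_{j\geq 0}(-1)^j\, (v_{(j)}a)_{(-j-2)}\,b.\]
Here the restriction to integer modes in $[a_\lambda b]^0=\sum_{n\in\ZZ_{\geq 0}}\lambda^{(n)}(a_{(n)}b)$ is precisely what forces the first sum to run over $n\in\ZZ_{\geq 0}$; this will match the integer summation index produced by \eqref{borcherds2}. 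All three sums are finite for large mode because $Y^M(v,z)b\in M(\!(z)\!)$ and $\Y(a,z)b\in M_3[\![z]\!]z^{-\Gamma}$ are lower-truncated in $z$, hence truncated above in mode.

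For \textnormal{iv)} I would specialize \eqref{borcherds2} to $n=0$, $m=-1$, $k=-1$. Since $\binom{0}{j}=\delta_{j,0}$ and $\binom{-1}{j}=(-1)^j$, and since for $n=0$ the factor $-(-1)^n=-1$ while for the right-hand side only $j=0$ survives, the identity collapses to
\[v_{(-1)}a_{(-1)}b - a_{(-1)}v_{(-1)}b = \sum_{j\geq 0}(-1)^j (v_{(j)}a)_{(-2-j)}\,b.\]
Recognizing $v(ab)=v_{(-1)}(a_{(-1)}b)$ and $a(vb)=a_{(-1)}(v_{(-1)}b)$, this is exactly \textnormal{iv)} under the dictionary above, the right-hand sum terminating because $v_{(j)}a=0$ for $j\gg0$ in $M_1$.

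For \textnormal{iii)} I would instead use \eqref{borcherds2} with $m=0$, $n=-1$, $k=-1$, which again makes the right-hand side the single term $(v_{(-1)}a)_{(-1)}$ and, with $-(-1)^{-1}=+1$, yields the iterate formula
\[(v_{(-1)}a)_{(-1)} = \sum_{j\geq 0}\bigl(v_{(-1-j)}a_{(-1+j)} + a_{(-2-j)}v_{(j)}\bigr)\]
as operators $M_2\to M_3$. Applying this to $b$, splitting off the $j=0$ term $v_{(-1)}(a_{(-1)}b)=v(ab)$ from the first sum and reindexing $j\mapsto j+1$, I obtain
\[(va)b - v(ab) = \sum_{j\geq 0} v_{(-2-j)}(a_{(j)}b) + \sum_{j\geq 0} a_{(-2-j)}(v_{(j)}b),\]
which is \textnormal{iii)} under the dictionary.

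The main obstacle is bookkeeping rather than anything conceptual. One must (a) justify that the infinite binomial sums forced by the negative value $m=-1$ in \textnormal{iv)} and by the first sum of the iterate formula in \textnormal{iii)} are genuinely finite, invoking the lower-truncation of $Y^M$ and $\Y$ in $z$; and (b) track carefully why the integer-truncated bracket $[a_\lambda b]^0$, rather than the full bracket $[a_\lambda b]$, is the correct object in \textnormal{iii)}. The latter is the essential point where the intertwiner case differs from the vertex-algebra case: the index $j$ produced by \eqref{borcherds2} is a nonnegative \emph{integer}, so only the integer modes $a_{(j)}b$ ever occur in the associativity defect, and the complex modes of $\Y$ do not enter. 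Finally the two integration orientations $\int_0^T$ and $\int_{-T}^0$ must be matched to the signs coming from $\binom{-1}{j}=(-1)^j$, as indicated in the dictionary.
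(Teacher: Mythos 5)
Your proposal is correct and follows essentially the same route as the paper: the paper likewise obtains \textnormal{iii)} from \eqref{borcherds2} with $m=0$, $n=k=-1$ (identity \eqref{eq3.3ref}) and \textnormal{iv)} with $n=0$, $m=k=-1$ (identity \eqref{eq3.4ref}), and unwinds the integrals via the same sesquilinearity dictionary, e.g.\ $-\frac{1}{(j+1)!}(-T)^{j+1}(v_{(j)}a)_{(-1)}b=(-1)^{j}(v_{(j)}a)_{(-2-j)}b$ and $\sum_{j\geq 0}a_{(-j-2)}v_{(j)}b=\bigl(\int_{0}^{T}d\lambda\, a\bigr)[v_{\lambda}b]$, with your observation about $[a_{\lambda}b]^{0}$ capturing exactly why only integer modes enter. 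One harmless slip: in your \textnormal{iv)} derivation it is the \emph{left}-hand sum of \eqref{borcherds2} (carrying $\binom{0}{j}=\delta_{j,0}$) that collapses to $j=0$, not the right-hand side, though the identity you display is the correct one.
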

\begin{proof}  $\textnormal{iii)}$  is equivalent to \eqref{borcherds2} for $m=0$, $n=k=-1 $ 
 \begin{equation}\label{eq3.3ref}
\begin{split}
\sum_{j\in \ZZ_+} &\left(v_{(-1-j)}a_{(-1+j)}b +a_{(-2-j)} v_{(j)}b\right)  =( v_{(-1)}a )_{(-1)}b \,.
\end{split}
\end{equation}
\vspace{-0,3cm}
Note that $\sum_{j\geq 0}{ a}_{(-j-2)}v_{(j)}b=\left(\int^{T}_{0}d\lambda a\right)[v_{\lambda}b]$ and 
\[
\sum_{j\in \ZZ_+} v_{(-1-j)}a_{(-1+j)}b =v  ( a  b)+\sum_{m\geq 0}v_{(-m-2)}{ a}_{(m)}b=v  ( a  b)+\left(\int^{T}_{0}d\lambda v\right)[{ a}_{\lambda}b]^0\, .
\]


$\textnormal{iv)}$  is equivalent to \eqref{borcherds2} for $n=0$, $m=k=-1 $ 
\begin{equation}\label{eq3.4ref}
\begin{split}
v_{(-1)}a_{(-1)}b -a_{(-1)} v_{(-1)}b =\sum_{j\in \ZZ_{\geq 0}} (-1)^{j}( v_{(j)}a )_{(-2-j)} b\,.
\end{split}
\end{equation}
Note that $-\frac{1}{(j+1)!}(-T)^{j+1}( v_{(j)}a )_{(-1)} b=(-1)^{j}( v_{(j)}a )_{(-2-j)} b$.

\end{proof}

Now, we introduce the liner map $\int^{\lambda}_{0}:U[\![\lambda^{-1}]\!]\lambda^{\Gamma} \rightarrow U[\![\lambda^{-1}]\!]\lambda^{\Gamma}$ defined linearly by its action on the monomials as follows: For $k\in \CC-\ZZ_{<0}$
\begin{equation}\label{defintegral} \int^{\lambda}_{0}\mu^{(k)}d\mu:=\lambda^{(k+1)}\, .\end{equation} 
For $k\in \ZZ_{<0}$, it is defined to be zero.
\begin{lemma}\label{lem3.4} For, $k\in \CC-\ZZ_{<0}$ and $i\in \ZZ_{\geq 0}$ we have 
\[\lambda^{(k+i+1)}=\int^{\lambda}_{0}\mu^{(k)}(\lambda-\mu)^{(i)}d\mu \, .\]
\end{lemma}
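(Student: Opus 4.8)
The plan is to compute the right-hand side directly from the definition \eqref{defintegral}, since no integration-by-parts rule is available in this purely formal calculus (the boundary terms $0^{(\cdot)}$ are not meaningful here). First I would expand the polynomial factor by the binomial theorem, writing $(\lambda-\mu)^{(i)}=\frac{1}{i!}\sum_{j=0}^{i}\binom{i}{j}(-1)^{j}\lambda^{i-j}\mu^{j}$; this is a finite sum because $i\in\ZZ_{\geq 0}$, so no convergence issue with the integral arises. Because the integration variable $\mu$ and the endpoint $\lambda$ are written with the same symbol, I would be explicit about the interpretation: expand the integrand in $\mu$ with $\lambda$ treated as a parameter, apply $\int^{\lambda}_{0}$ monomial by monomial, and only then re-identify the resulting $\lambda$ with the parameter.

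Next I would rewrite each $\mu^{(k)}\mu^{j}$ as a scalar multiple of a single monomial $\mu^{(k+j)}$ using \leref{lem0}(2) iteratively, namely $\mu^{(k)}\mu^{j}=\frac{\Gamma(k+j+1)}{\Gamma(k+1)}\mu^{(k+j)}$. Here I would record the small but essential point that, since $k\in\CC-\ZZ_{<0}$ and $j\in\ZZ_{\geq 0}$, the shifted index $k+j$ again lies in $\CC-\ZZ_{<0}$ (if $k+j$ were a negative integer then $k$ would be an integer, hence a non-negative one, forcing $k+j\geq 0$). Thus the definition \eqref{defintegral} applies to every term, giving $\int^{\lambda}_{0}\mu^{(k+j)}d\mu=\lambda^{(k+j+1)}$, and after pulling out the $\mu$-independent factor $\lambda^{i-j}$ each term collapses onto the single power $\lambda^{k+i+1}$.

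Collecting the Gamma factors, the right-hand side becomes $\frac{\lambda^{k+i+1}}{i!\,\Gamma(k+1)}\sum_{j=0}^{i}\binom{i}{j}\frac{(-1)^{j}}{k+j+1}$, so the lemma reduces to the scalar identity
\[
\sum_{j=0}^{i}\binom{i}{j}\frac{(-1)^{j}}{k+j+1}=\frac{i!\,\Gamma(k+1)}{\Gamma(k+i+2)}=\frac{i!}{(k+1)(k+2)\cdots(k+i+1)},
\]
after which the target $\lambda^{(k+i+1)}=\lambda^{k+i+1}/\Gamma(k+i+2)$ is immediate. I would prove this identity by partial fractions: the rational function on the far right has simple poles at $k=-1,\dots,-(i+1)$, and its residue at $k=-(j+1)$ is exactly $(-1)^{j}\binom{i}{j}$, which yields the stated decomposition (equivalently, the left-hand side is the Beta integral $\int_0^1 t^{k}(1-t)^{i}\,dt$ expanded binomially). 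Since both sides are rational in $k$, the equality holds for all $k\in\CC-\ZZ_{<0}$, a range that excludes precisely the poles.

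I expect the main obstacle to be bookkeeping rather than conceptual: one must handle the clash between the integration variable and the endpoint correctly, verify that the shift $k\mapsto k+j$ keeps the index in the admissible range $\CC-\ZZ_{<0}$, and prove the partial-fraction identity together with its validity on that whole range. As a sanity check I would confirm the normalization at $i=0$ (which is just \eqref{defintegral}) and at $i=1$ (using \leref{lem0}(2) twice, giving $(k+2)\lambda^{(k+2)}-(k+1)\lambda^{(k+2)}=\lambda^{(k+2)}$), which matches the identity for those small values.
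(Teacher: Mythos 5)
Your proof is correct, and it shares its skeleton with the paper's: both expand $(\lambda-\mu)^{(i)}$ binomially, integrate term by term via \eqref{defintegral}, and reduce the lemma to a Vandermonde-type scalar identity. The difference is in how that identity is handled and in the direction of travel. The paper starts from $\lambda^{(k+i+1)}$, inserts $1=\sum_{j=0}^{i}\binom{-k-1}{j}\binom{k+i+1}{i-j}$ (quoting Chu--Vandermonde), and reassembles the integral entirely in divided-power notation; you instead compute the right-hand side in ordinary powers with explicit Gamma factors and reduce to $\sum_{j=0}^{i}\binom{i}{j}\frac{(-1)^{j}}{k+j+1}=\frac{i!}{(k+1)\cdots(k+i+1)}$, which you prove from scratch by partial fractions (or the Beta integral) plus rational continuation. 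These two scalar identities are in fact the same fact in different clothing: multiplying yours through by $\frac{1}{i!}\prod_{m=0}^{i}(k+m+1)$ and using $\binom{i}{j}\frac{\prod_{m\neq j}(k+m+1)}{i!}=\binom{k+j}{j}\binom{k+i+1}{i-j}$ recovers exactly the paper's Chu--Vandermonde form. What your route buys is self-containedness and two points of rigor the paper leaves implicit: the check that $k+j\in\CC-\ZZ_{<0}$ (so \eqref{defintegral} really yields $\lambda^{(k+j+1)}$ rather than $0$, using Lemma \ref{lem0}(2) safely since $\Gamma(k+1)\neq\infty$ on this range), and the careful interpretation of the clash between the integration variable and the endpoint $\lambda$; what the paper's route buys is brevity and a computation that never leaves the divided-power calculus, which is the formalism actually used in the subsequent propositions.
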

\begin{proof} From Chu-Vandermonde identity $1=\sum_{j\geq 0}^{i}\binom{-k-1}{j}\binom{k+i+1}{i-j}$. Then  
\begin{align*}
\lambda^{(k+i+1)}&=\sum_{j\geq 0}^{i}(-1)^{j}\binom{k+j}{j}\binom{k+i+1}{i-j}\lambda^{(k+i+1)}\\
&=\sum_{j\geq 0}^{i}(-1)^{j}\binom{k+j}{j}\lambda^{(i-j)}\int^{\lambda}_{0}\mu^{(k+j)}d\mu\\
&=\int^{\lambda}_{0}\sum_{j\geq 0}^{i}(-1)^{j}\mu^{(k)}\mu^{(j)}\lambda^{(i-j)}d\mu\, .
\end{align*} 
\end{proof}

Now the $\lambda$-bracket and the product satisfy the following properties

\begin{proposition}\label{pro3.4} The $\lambda$-bracket and the product for intertwiners satisfy $\textnormal{v)}$,  $\textnormal{vi)}$ and $\textnormal{vii)}$
\end{proposition}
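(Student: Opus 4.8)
The plan is to prove each of v), vi), vii) by exactly the mechanism used in Propositions \ref{pro3.2} and \ref{pro3.3}: specialize the Borcherds-type identity \eqref{borcherds2} to suitable values of $n,m,k$, apply the Fourier transform $F^\lambda_z$ (and, for the double brackets, also $F^\mu_w$), and then recognize the resulting mode sums as products and $\lambda$-brackets with the help of the integral \eqref{defintegral} and Lemma \ref{lem3.4}. Concretely, I expect v) to be the image under $F^\lambda_z$ of \eqref{borcherds2} with $n=0$, $k=-1$ and $m\in\ZZ_{\geq 0}$; identity vi) to be the image of \eqref{borcherds2} with $n=-1$, $m=0$, $k\in\CC$; and vii) to be the image of \eqref{borcherds2} with $n=0$, $m=-1$, $k\in\CC$.

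For v), this specialization of \eqref{borcherds2} reads $v_{(m)}(a_{(-1)}b)=a\cdot(v_{(m)}b)+\sum_{j\geq 0}\binom{m}{j}(v_{(j)}a)_{(m-1-j)}b$. Applying $F^\lambda_z$ turns the first term into $a[v_\lambda b]$. In the double sum the diagonal $m=j$ produces exactly $(v_{(j)}a)_{(-1)}b\,\lambda^{(j)}=[v_\lambda a]b$, while the off-diagonal part, written as $m=p+1+j$ with $p\geq 0$, contributes the nonnegative modes; here the elementary identity $\lambda^{(j)}\lambda^{(p+1)}=\binom{p+1+j}{j}\lambda^{(p+1+j)}$ together with $\int_0^\lambda\mu^{(p)}d\mu=\lambda^{(p+1)}$ identifies this remainder with $\int_0^\lambda[[v_\lambda a]_\mu b]^0\,d\mu$, giving v).

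For vi), the product $va=v_{(-1)}a$ is expanded by \eqref{borcherds2} into $(v_{(-1)}a)_{(k)}b=\sum_{j\geq 0}\bigl(v_{(-1-j)}a_{(k+j)}+a_{(k-1-j)}v_{(j)}\bigr)b$. After $F^\lambda_z$ the first family, rewritten via $v_{(-1-j)}w=\tfrac{1}{j!}(T^jv)\cdot w$ and $\partial_\lambda^j[a_\lambda b]=\sum_k a_{(k)}b\,\lambda^{(k-j)}$, assembles into $(e^{T\partial_\lambda}v)[a_\lambda b]$. The second family splits according to the mode $p$ of $a_{(p)}$: the negative-integer modes $p=-1-i$ reproduce $(e^{T\partial_\lambda}a)[v_\lambda b]$ through $a_{(-1-i)}w=\tfrac{1}{i!}(T^ia)\cdot w$ (only $i\leq j$ survive, by the truncation of $\lambda^{(\cdot)}$), while the remaining modes $p\in\CC-\ZZ_{<0}$ give $\int_0^\lambda[a_\mu[v_{\lambda-\mu}b]]\,d\mu$ by Lemma \ref{lem3.4}. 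This yields vi).

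The delicate case, and the one I expect to be the main obstacle, is vii), because there the outer element $a$ lies in $M_1$ and the symbol $[a_\lambda v]$ is only defined through skew-symmetry as $-[v_{-\lambda-T}a]$, so the translation operator sits inside the modes. Specializing \eqref{borcherds2} and applying $F^\lambda_z$ gives $[a_\lambda vb]=v[a_\lambda b]-\sum_{j\geq 0}(-1)^j\sum_{p}(v_{(j)}a)_{(p)}b\,\lambda^{(p+1+j)}$, where $p$ runs over the support with $p+1+j\notin\ZZ_{<0}$. To match the right-hand side I would expand $[a_\lambda v]=-\sum_j(v_{(j)}a)(-\lambda-T)^{(j)}$, use $(T^iw)\cdot b=i!\,w_{(-1-i)}b$ to show that $[a_\lambda v]b$ captures precisely the finitely many modes $p\in\{-1,\dots,-1-j\}$, and use translation covariance in the form $[(T^iw)_\mu b]=(-\mu)^i[w_\mu b]$ together with the finite Vandermonde $\sum_i(-\lambda)^{(j-i)}\mu^{(i)}=(\mu-\lambda)^{(j)}$ (Lemma \ref{lem0}(3)) to reduce $\int_0^\lambda[[a_\lambda v]_\mu b]\,d\mu$ to $-\sum_j(-1)^j\sum_{p\in\CC-\ZZ_{<0}}(v_{(j)}a)_{(p)}b\,\lambda^{(p+1+j)}$ via Lemma \ref{lem3.4}. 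The final point is purely combinatorial bookkeeping: the union of the two mode ranges $\{-1,\dots,-1-j\}\cup(\CC-\ZZ_{<0})$ equals $\CC-\ZZ_{\leq -2-j}$, which coincides with the support $\{p:p+1+j\notin\ZZ_{<0}\}$ appearing on the Borcherds side, so the product term and the integral term together reproduce the full sum and establish vii).
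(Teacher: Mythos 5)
Your proposal is correct and is essentially the paper's own proof: the paper obtains v), vi) and vii) from exactly the same specializations of \eqref{borcherds2} — $n=0$, $m\in\ZZ_{\geq 0}$, $k=-1$ for v); $n=-1$, $m=0$, $k\in\CC-\ZZ_{<0}$ for vi); $n=0$, $m=-1$, $k\in\CC-\ZZ_{<0}$ for vii) — and identifies the resulting mode sums with the product, the $e^{T\partial_{\lambda}}$-terms and the integral terms via \eqref{defintegral} and Lemma \ref{lem3.4}, just as you do. The only divergence, and it is cosmetic, is in vii): the paper inverts skew-symmetry to write $v_{(i)}a=-\sum_{j\geq 0}(-1)^{i+j}\frac{1}{j!}T^{j}(a_{(i+j)}v)$ and resums $\sum_{i\geq 0}(-1)^{i}(v_{(i)}a)_{(-1+k-i)}b$ into $-\sum_{r}\binom{k}{r}(a_{(r)}v)_{(-1+k-r)}b$ using translation covariance and the identity $\sum_{j=0}^{l}\binom{n+j}{j}=\binom{n+l+1}{l}$, whereas you expand $[a_{\lambda}v]b$ and $\int_{0}^{\lambda}[[a_{\lambda}v]_{\mu}b]\,d\mu$ directly in the modes $v_{(j)}a$ and check that the two disjoint mode supports $\{-1,\dots,-1-j\}$ and $\CC-\ZZ_{<0}$ exhaust the Borcherds-side support — the same computation run in the opposite direction.
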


\begin{proof} 
$\textnormal{v)}$ is equivalent to identity \eqref{borcherds2} for $n=0$, $m\in \ZZ_{\geq 0}$ and $k=-1$  
\begin{equation}\label{eq3.4}
\begin{split}
v_{(m)}a_{(-1)}b -a_{(-1)} v_{(m)}b =\sum_{j\in \ZZ_{\geq 0}} \binom{m}{j}( v_{(j)}a )_{(m-1-j)} b\,.
\end{split}
\end{equation}

$\textnormal{vi)}$ is equivalent to the identity \eqref{borcherds2} for $m=0$, $n=-1$, $k\in\CC- \ZZ_{< 0}$
\begin{equation}\label{eq3.5}
\sum_{j\in \ZZ_{\geq 0}} \left(v_{(-1-j)}a_{(k+j)}b +a_{(-1+k-j)} v_{(j)}b\right)  =( v_{(-1)}a )_{(k)}b\,.
\end{equation}
Note that 
\[\sum_{k\in\CC- \ZZ_{<0}}  \sum_{j\in \ZZ_{\geq 0}} \lambda^{(k)}a_{(-1+k-j)} v_{(j)}b=\left(\sum_{k\in \ZZ_{\geq 0}}+\sum_{k\in \CC-\ZZ}\right) \sum_{j\in \ZZ_{\geq 0}} \lambda^{(k)}a_{(-1+k-j)} v_{(j)}b\, , \]
where 
\begin{align*}
\sum_{k\in \ZZ_{\geq 0}}\sum_{j\in \ZZ_{\geq 0}} \lambda^{(k)}a_{(-1+k-j)} v_{(j)}b&=\sum_{k\in \ZZ_{\geq 0}}\left(\sum_{j\geq k} +\sum_{k>j\geq 0}\right)\lambda^{(k)}a_{(-1+k-j)} v_{(j)}b\\
&=(e^{T\partial_{\lambda}}a)[v_{\lambda}b]+\int_{0}^{\lambda}  [a_{\mu}[v_{\lambda-\mu}b]]d\mu\,, 
\end{align*}
 on the second term on the right-hand side we use Lemma \ref{lem3.4}. Also, by Lemma  \ref{lem3.4} we have 
\[\sum_{k\in \CC-\ZZ}\sum_{j\in \ZZ_{\geq 0}} \lambda^{(k)}a_{(-1+k-j)} v_{(j)}b=\int_{0}^{\lambda}  [a_{\mu}[v_{\lambda-\mu}b]]d\mu\, .\]

$\textnormal{vii)}$ is equivalent to the identity \eqref{borcherds2} for $n=0$, $m=-1$, $k\in \CC-\ZZ_{<0}$
\begin{equation}\label{eq3.6b}v_{(-1)}a_{(k)}b -a_{(k)} v_{(-1)}b =\sum_{i\in \ZZ_{\geq 0}}(-1)^i( v_{(i)}a )_{(-1+k-i)} b\,.
\end{equation}
Now $[a_{\lambda}v]=-[v_{-\lambda-T}a]$ is equivalent to $v_{(i)}a=-\sum_{j\geq 0}(-1)^{i+j}\frac{1}{j!}T^{j}(a_{(i+j)}v)$ for $i\geq 0$. Hence   
\begin{align*}
&\sum_{i\geq 0}(-1)^{i}(v_{(i)}a)_{(-1+k-i)}b=
-\sum_{r=i+j\geq 0}\binom{k}{r}(a_{(r)}v)_{(-1+k-r)}b ,
\end{align*}
using that $(T^{(k)} a)_{(n)}=(-1)^k \binom{n}{k}a_{(n-k)}$ and that  ${ \sum _{j=0}^{l}{\binom {n+j}{j}}={\binom {n+l+1}{l}}}$ for $l\in \ZZ_{\geq 0}$, $n\in \CC$.
\end{proof}

We have the following Proposition, see \cite[Proposition 4.8]{K}.

\begin{proposition} \label{p3.7}The identity \eqref{b} is equivalent to the following two identities
\begin{align*}
&\Y(v_{(-1)} a, z)b=Y_{+}(v,z)\Y(a,z)b+\Y(a,z) Y_{-}(v,z)b\, ,\\
&Y(v,z_1)\Y(a, z_2)b-\Y(a, z_2)Y(v,z_1)b=\sum_{i\geq 0}\Y(v_{(i)}a, z_2)b\partial^{(i)}_{z_2}\delta(z_1, z_2)\, ,
\end{align*}
where $Y_{+}(v,z):=\sum_{n<0} v_{(n)}z^{-n-1}$ and $Y_{-}(v, z):=\sum_{n\geq 0} v_{(n)}z^{-n-1}$.
\end{proposition}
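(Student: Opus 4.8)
The plan is to prove the equivalence between the Jacobi/commutator identity \eqref{b} and the two stated identities in two directions, exploiting the structure of the formal delta function $\delta(z_1,z_2)=\sum_{n\in\ZZ}z_1^{n}z_2^{-n-1}$ and the expansion conventions encoded in $\iota_{z_1,z_2}$ and $\iota_{z_2,z_1}$. First I would recall that $\iota_{z_1,z_2}z_{12}^{n}-\iota_{z_2,z_1}z_{12}^{n}=\sum_{i\geq 0}(-1)^i\binom{n}{i}z_2^{n-i}\partial^{(i)}_{z_2}\delta(z_1,z_2)$ for $n\in\ZZ$, together with the basic substitution property $f(z_1)\partial^{(i)}_{z_2}\delta(z_1,z_2)=\sum_{k}\partial^{(k)}_{z_2}f(z_2)\,\partial^{(i-k)}_{z_2}\delta(z_1,z_2)$, which lets one transfer all operators to be evaluated at $z_2$.

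For the forward direction, the idea is that identity \eqref{b} packages infinitely many coefficient relations, and the two displayed identities isolate two complementary pieces: the $n=-1$ case and the $n\geq 0$ case (equivalently, the singular and regular parts in $z_1$). Taking $n=-1$ in \eqref{b} and using $\iota_{z_1,z_2}z_{12}^{-1}=Y_+$-type expansion versus $\iota_{z_2,z_1}z_{12}^{-1}=-Y_-$-type expansion, I would extract the first identity by recognizing $Y_+(v,z)$ and $Y_-(v,z)$ as precisely the two expansions of $\iota z_{12}^{-1}Y(v,z_1)$ specialized at $z_1=z_2=z$, and reading off the constant term $(v_{(-1)}a)$ from the right-hand side via $\partial^{(i)}_{z_2}\delta$ applied at the appropriate order. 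Taking $n=0$ in \eqref{b} gives the second identity directly, since $z_{12}^{0}=1$ makes both $\iota$'s act trivially and the commutator $[Y(v,z_1),\Y(a,z_2)]$ equals the stated sum over $i\geq 0$ of $\Y(v_{(i)}a,z_2)$ against $\partial^{(i)}_{z_2}\delta(z_1,z_2)$.

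For the converse, I would show the two identities generate \eqref{b} for all $n\in\ZZ$. The second identity is the $n=0$ instance; multiplying it by $z_{12}^{n}$ and expanding via the appropriate $\iota$ handles $n\geq 0$ after observing that $z_{12}^{n}\partial^{(i)}_{z_2}\delta(z_1,z_2)$ reorganizes into $\sum_j\binom{n}{j}\partial^{(i-j)}_{z_2}\delta$ with coefficients matching the $v_{(n+i)}a$ terms. The first identity, governing the $v_{(-1)}$-product, supplies the negative-$n$ behaviour through a translation/residue argument: applying $\partial_{z}$ and using Definition \ref{defint}(i) ($D_z\Y(a,z)=\Y(Ta,z)$) together with the skew properties lets one descend from $n=-1$ to all $n<0$ recursively. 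I would assemble these to recover \eqref{b} in full.

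The main obstacle is bookkeeping the expansion directions consistently: the terms $\iota_{z_1,z_2}$ versus $\iota_{z_2,z_1}$ differ exactly by delta-function contributions, and the split $Y=Y_++Y_-$ must be matched carefully against which side of the commutator each half lands on. The subtle point is that the intertwiner sums over $n\in\CC$ rather than $n\in\ZZ$, so when isolating the $v_{(-1)}a$ term I must verify that only integer shifts appear in the interaction with $Y(v,z_1)$ (whose modes are indexed by $\ZZ$), guaranteeing the delta-function manipulations remain valid; this is where I expect the proof to require the most care, and it is essentially the content already recorded in Proposition \ref{pro3.2} via the Borcherds-type identity \eqref{borcherds2}.
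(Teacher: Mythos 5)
Your forward direction is correct and follows the same route as the paper, whose proof simply defers to the steps of \cite[Theorem 4.8]{K}: the second identity is the $n=0$ case of \eqref{b}, while the first is $\Res_{z_1}$ of the $n=-1$ case; and your observation that the $z_1$-variable carries only integer powers (the modes of $Y(v,z_1)$ are indexed by $\ZZ$, even though those of $\Y$ are indexed by $\CC$) is indeed what keeps the delta-function calculus valid. The recovery of \eqref{b} for $n\geq 0$ in the converse is also fine: multiply the commutator formula by $z_{12}^{n}$ and use $(z_1-z_2)\partial^{(i)}_{z_2}\delta(z_1,z_2)=\partial^{(i-1)}_{z_2}\delta(z_1,z_2)$ (this yields a single index shift $i\mapsto i-n$, not the binomial reorganization you describe, but the conclusion is the same).

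The genuine gap is the descent to $n<0$. Your plan to ``apply $\partial_z$ and use $D_z\Y(a,z)=\Y(Ta,z)$ to descend from $n=-1$ recursively'' cannot start: differentiation in $z_1$ relates the \emph{full two-variable} identity at level $n$ (together with the level-$n$ identity for $Tv$) to the one at level $n-1$, but you do not possess the full identity at $n=-1$ --- the first displayed identity is only its residue in $z_1$, a single Fourier coefficient out of infinitely many. (Differentiating the $n=0$ identity does not descend, since $\partial_{z_1}z_{12}^{0}=0$, and multiplying by an expansion of $z_{12}^{-1}$ is ill defined on delta-supported distributions.) The missing idea, which is the heart of the argument in \cite{K}, is this: let $X_n(z_1,z_2)$ denote the difference of the two sides of \eqref{b} at level $n$ and induct downward from $n=0$. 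Multiplying by $z_1-z_2$ gives $(z_1-z_2)X_n=X_{n+1}=0$, so the standard decomposition lemma for formal distributions killed by $z_1-z_2$ (applicable because $X_n$ has integer powers in $z_1$, with coefficients allowed to lie in $M_3[\![z_2]\!]z_2^{-\Gamma}$) forces $X_n=c_n(z_2)\,\delta(z_1,z_2)$ with $c_n=\Res_{z_1}X_n$; and this residue vanishes because it is precisely the first identity applied to $T^{(-n-1)}v$, using $v_{(n)}=(T^{(-n-1)}v)_{(-1)}$ and $Y_{\pm}(T^{(k)}v,z)=\partial^{(k)}_{z}Y_{\pm}(v,z)$. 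So translation covariance does enter, but through rewriting the deep negative modes of $v$, not through differentiating the identities; the ``skew properties'' you invoke play no role here (no skew-symmetry is available for intertwiners at this level of generality --- the paper only \emph{defines} $[a_{\lambda}v]:=-[v_{-\lambda-T}a]$).
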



The proof follows  the same steps  in \cite[Theorem 4.8]{K}. 
In the following theorem we assume that for all $a\in M_1, b\in M_2$
\begin{equation}\label{eq3.9}[a_{\lambda}b]=\sum_{n\in \CC-\ZZ_{< 0}} \lambda^{(n)} c_{(n)}\, ,\end{equation}  
where $c_{(n)}\in M_3$. The proof of the theorem below is similar to \cite{BK}.

\begin{theorem}\label{theorem3.8}Let $M_1 , M_2, M_3$ be $V$-modules. An intertwiner of type $\binom{M_3}{M_1\quad M_2}$ is equivalently defined by a $\lambda$-bracket satisfying \eqref{eq3.9} and a product
\[[\cdot_{\lambda}\cdot]:M_1\otimes M_2\rightarrow M_3[\![\lambda^{-1}]\!]\lambda^{\Gamma}\, , \qquad \cdot:M_1\otimes M_2\rightarrow M_3\] such that
\begin{enumerate}
\item The $\lambda$-bracket $[\cdot_{\lambda}\cdot]$ satisfies $\textnormal{i)}$, $\textnormal{ii)}$.
\item The product $\cdot$ has derivative $T$ and satisfies $\textnormal{iii)}$, $\textnormal{iv)}$.
\item $[\cdot_{\lambda}\cdot]$ and $\cdot$  satisfies $\textnormal{v)}$, $\textnormal{vi)}$, $\textnormal{vii)}$.
\end{enumerate}

\end{theorem}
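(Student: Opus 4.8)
The statement is an equivalence, and one direction is already in hand. If $\Y$ is an intertwiner, then setting $[a_{\lambda}b]:=F^{\lambda}_{z}(\Y(a,z)b)$ and $a\cdot b:=a_{(-1)}b$ produces a $\lambda$-bracket of the form \eqref{eq3.9} (the Fourier transform annihilates the modes $a_{(n)}b$ with $n\in\ZZ_{<0}$) together with a product having derivative $T$, and Propositions \ref{pro3.2}, \ref{pro3.3}, \ref{pro3.4} establish i)--vii). So the plan is to prove the converse: given data $[\cdot_{\lambda}\cdot]$ and $\cdot$ satisfying (1)--(3), reconstruct an intertwiner and show it recovers the given bracket and product.

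First I would reconstruct the modes of $\Y$. Expanding \eqref{eq3.9} as $[a_{\lambda}b]=\sum_{n\in\CC-\ZZ_{<0}}\lambda^{(n)}c_{(n)}$ defines the modes $a_{(n)}b:=c_{(n)}$ for $n\in\CC-\ZZ_{<0}$, while the negative integer modes are defined from the product and $T$ by $a_{(-1-k)}b:=\frac{1}{k!}(T^{k}a)\cdot b$ for $k\geq0$, consistent with the identity recorded just before Proposition \ref{pro3.3}. Setting $\Y(a,z)b:=\sum_{n\in\CC}a_{(n)}b\,z^{-n-1}$, I would check that this lands in $M_3[\![z]\!]z^{-\Gamma}$: by \eqref{eq3.9} the bracket part contributes finitely many residues modulo $\ZZ$ with exponents bounded above, and the product part contributes the integer tower $z^{-1},z^{-2},\dots$, so the total support has the required shape. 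Translation covariance, axiom (i) of Definition \ref{defint}, then follows by matching coefficients: property i) gives $(Ta)_{(n)}b=-n\,a_{(n-1)}b$ on the range $n\in\CC-\ZZ_{<0}$ via Lemma \ref{lem0}(2), and on the negative integer tower it follows from $T(a\cdot b)=Ta\cdot b+a\cdot Tb$ together with the reconstruction formula.

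The heart of the proof is the Jacobi identity \eqref{b}, which by Proposition \ref{p3.7} is equivalent to the commutator formula together with the associativity formula $\Y(v_{(-1)}a,z)b=Y_{+}(v,z)\Y(a,z)b+\Y(a,z)Y_{-}(v,z)b$. In mode form these are exactly the Borcherds identity \eqref{borcherds2} for $n=0$ (all $m\in\ZZ$, $k\in\CC$) and for $n=-1$, $m=0$ (all $k\in\CC$), respectively. The identities i)--vii) furnish \eqref{borcherds2} precisely at the boundary values of these parameters: ii), v), vii), iv) give the $n=0$ case for $m\in\{-1\}\cup\ZZ_{\geq0}$ and $k\in\{-1\}\cup(\CC-\ZZ_{<0})$, while iii) and vi) give the $m=0$, $n=-1$ case for $k\in\{-1\}\cup(\CC-\ZZ_{<0})$. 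The remaining task is to propagate these to all integer mode indices.

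The main obstacle is exactly this propagation, which I would carry out by induction on the integer parameters using the translation operator. On the $v$-side, the module relation $(Tv)_{(n)}=-n\,v_{(n-1)}$ expresses a lower mode $v_{(m)}$ through the mode $v_{(m+1)}$ of $Tv$, which lets me descend from the known range $m\geq-1$ to all $m\leq-2$; on the $a$-side, the reconstruction $a_{(-1-k)}b=\frac{1}{k!}(T^{k}a)\cdot b$ together with sesquilinearity fills in the negative integer values of $k$ that the $\lambda$-bracket does not see. Verifying that the right-hand sides of \eqref{borcherds2} transform compatibly under these substitutions is the delicate bookkeeping, and it runs entirely parallel to the argument for vertex algebras in \cite{BK}. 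Once the commutator and associativity formulas hold for all parameters, Proposition \ref{p3.7} yields \eqref{b}, and combined with the translation covariance already established this shows $\Y$ is an intertwiner of type $\binom{M_3}{M_1\ M_2}$ recovering the given bracket and product, completing the equivalence.
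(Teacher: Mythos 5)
Your proposal is correct and follows essentially the same route as the paper's proof: you reconstruct the modes from \eqref{eq3.9} and the product via $a_{(-1-k)}b=(\tfrac{1}{k!}T^{k}a)\cdot b$, derive translation covariance from i) and the derivation property of $T$, read off the boundary cases of \eqref{borcherds2} from ii), v), vii), iv) and from vi), iii) exactly as in the paper's identities \eqref{eq3.12} and \eqref{eq4.3}, extend to all integer parameters by translation covariance, and conclude via Proposition \ref{p3.7}. The only difference is presentational: you spell out the sesquilinearity-based propagation step that the paper compresses into the phrase ``using translation covariance,'' with both deferring the remaining bookkeeping to the argument of \cite{BK}.
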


\begin{proof} For $a\in M_1$, $b\in M_2$  we define, see \eqref{eq3.9}, 
\[a_{(n)}b:= c_{(n)} ,\quad n\in \CC-\ZZ_{< 0} \quad\text{ and } \quad a_{(-1-n)}b:=(\frac{1}{n!}T^{n}a)\cdot b, \quad  n\in \ZZ_{\geq 0}\, .  \] 
And, we define $\Y(a,z)b:=\sum_{n\in \CC} a_{(n)}b\, z^{-n-1}\in M_3[\![z]\!]z^{-\Gamma}$. Then, from $\textnormal{ii)}$ and $T$ being a differential of the product we obtain
\begin{equation}\label{trans}[T,\Y(a,z)]=\Y(T a,z)b=D_{z}\Y(a,z)\, .\end{equation}

We have the identity
\begin{equation}\label{eq3.12}
\begin{split}
v_{(m)}a_{(k)}b -a_{(k)} v_{(m)}b =\sum_{j\in \ZZ_{\geq 0}} \binom{m}{j}( v_{(j)}a )_{(m+k-j)} b\,,
\end{split}
\end{equation}
for $m\in \ZZ_{\geq 0}$, $k\in \CC-\ZZ_{<0}$ from $\textnormal{ii)}$, see \eqref{eq4.1}; for $m\in \ZZ_{\geq 0}$,  $k=-1$ from $\textnormal{v)}$, see \eqref{eq3.4}; for $m=-1$, $k\in \CC-\ZZ_{<0}$ from $\textnormal{vii)}$, see \eqref{eq3.6b}; and for $m=-1$, $k=-1$ from $\textnormal{iv)}$, see \eqref{eq3.4ref}. Then, using translation covariance we obtain \eqref{eq3.12} for  $m\in \ZZ$, $k\in \CC$. 

And, we have the identity
\begin{equation}\label{eq4.3}
\sum_{j\in \ZZ_{\geq 0}} \left(v_{(-1-j)}a_{(k+j)}b +a_{(-1+k-j)} v_{(j)}b\right)  =( v_{(-1)}a )_{(k)}b\,.
\end{equation}
For $k\in \CC- \ZZ_{< 0}$ from $\textnormal{vi)}$, see \eqref{eq3.5}. And for $k=-1$ from $\textnormal{iii)}$, see \eqref{eq3.3ref}. Then using translation covariance we obtain \eqref{eq4.3} for $k\in \CC$.

Finally, \eqref{eq3.12} and \eqref{eq4.3} are equivalent to the identities in Proposition \ref{p3.7}. Hence, we obtain \eqref{b}.
\end{proof}




\section{Applications}\label{s4}
\subsection{Virasoro algebra} 

The Virasoro algebra is the Lie algebra $\mathfrak{vir}=\left(\bigoplus_{n\in \ZZ}\CC L_{n}\right)\oplus \CC C$ with commutations relations 
\[[L_n, L_m]=(n-m)L_{m+n}+\frac{n^3-n}{12}C\delta_{n,-m}, \qquad [L_n , C]=0\, .\]

Now, we have  the space
\[\text{Vir}^c:=U(\text{Vir})\otimes_{U(\bigoplus_{n\geq -1}\CC L_{n}\oplus \CC C )}\CC_c\]
where $C$ acts by $c$ and  $L_{n}$ acs by zero for $n\geq -1$. $\text{Vir}^c$ is the universal Virasoro vertex algebra, $\vac=1\otimes 1$, $T=L_{-1}$ and $Y:\text{Vir}^c\rightarrow \Hom (\text{Vir}^c, \text{Vir}^c((z)))$
 is given by
\[Y((L_{-n_1-2})\cdots (L_{-n_r-2})\vac_{}, z):=:\partial^{(n_1)}_{z}L(z)\cdots \partial^{(n_r)}_{z}L(z):\]
where $L(z)=\sum_{n\in \ZZ}L_n z^{-n-2}$ and $:\, :$ denotes the normally ordered product, \cite[Sec 3.1]{K}. 

For $L=(L_{-2})\vac\in \text{Vir}^c$ the $\lambda$-bracket, see \eqref{lambdava}, is given by 
\begin{equation}\label{eqvir}
[L_{\lambda}L]=(2\lambda+\partial)L+\frac{c}{12}\lambda^{3}\, .
\end{equation}

Let $M$ be a $\text{Vir}^c$-module, we say that $a\in M$ is a \emph{primary vector}  if $L_{n}a=0$ for $n>0$ and $L_{0}a=h_a a$. Such vectors are also known as singular, or null, vectors. Equivalently using the $\lambda$-bracket, see \eqref{lambdam}, we have
\[[{L}_{\lambda}a]=(T+h_a \lambda)a\, .\]

Now, we consider intertwiners of type $\binom{M_3}{M_{1}\quad M_{2} }$.  By definition  \ref{defint}, it exists $\{m_1, \cdots , m_d\}\in \Gamma\subset \CC$ such that for $a\in M_1$, $b\in M_2$
\begin{equation}\label{eq4.2int}
\Y(a, z)b=\sum_{n\in \Gamma}a_{(n)}bz^{-n-1}=\sum_{i}\sum_{n\geq 0}{a}_{(m_i-n)}bz^{-m_i+n-1}
\end{equation}

The $\lambda$-bracket, see \eqref{lambdaint},  gives us 
\begin{equation}\label{eq5.1}
[{a}_{\lambda}b]=\sum_{i}\sum_{n\geq 0}\lambda^{(m_i-n)}{a}_{(m_i-n)}b\, .
\end{equation}

We have the following Lemma 



\begin{lemma}\label{pro5.1} Let $a\in M_1, b\in M_2$ be primary vectors. Then 
\[[L_{\lambda}{a}_{(m_i)}b]=(\lambda h_c+\partial){a}_{(m_i)}b\]
 where $m_i=h_a+h_b -1-h_{c}$ i.e. ${a}_{(m_i)}b$ is a primary vector. 
\end{lemma}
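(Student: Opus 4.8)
The plan is to commute $L$ past the intertwiner bracket via the Jacobi identity \textnormal{ii)} and then read off the leading coefficient in $\mu$. Setting $v=L$ in \textnormal{ii)} gives
\[
[L_{\lambda}[a_{\mu}b]]=[a_{\mu}[L_{\lambda}b]]+\iota_{\mu,\lambda}[[L_{\lambda}a]_{\lambda+\mu}b].
\]
Since $a$ and $b$ are primary, $[L_{\lambda}b]=(T+h_b\lambda)b$ and $[L_{\lambda}a]=(T+h_a\lambda)a$. I would then clean up the right-hand side using sesquilinearity \textnormal{i)} in the two forms $[a_{\mu}Tb]=(\mu+T)[a_{\mu}b]$ and $[(Ta)_{\nu}b]=-\nu[a_{\nu}b]$, which collapses the identity to
\[
[L_{\lambda}[a_{\mu}b]]=(T+\mu+h_b\lambda)[a_{\mu}b]+\iota_{\mu,\lambda}\bigl((h_a-1)\lambda-\mu\bigr)[a_{\lambda+\mu}b].
\]

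Next I would expand both sides in powers of $\mu$ using \eqref{eq5.1}, so that $[a_{\mu}b]=\sum_i\sum_{n\geq 0}\mu^{(m_i-n)}a_{(m_i-n)}b$ and
\[
[L_{\lambda}[a_{\mu}b]]=\sum_i\sum_{n\geq 0}\mu^{(m_i-n)}[L_{\lambda}(a_{(m_i-n)}b)].
\]
Fix one leading exponent $m=m_i$, maximal in its coset modulo $\ZZ$; then no term $a_{(k)}b$ with $k>m$ occurs in that coset, and distinct cosets do not contribute to $\mu^{(m)}$. On the left the coefficient of $\mu^{(m)}$ is exactly $[L_{\lambda}(a_{(m)}b)]$, which is the object I want to identify.

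The core step is extracting the coefficient of $\mu^{(m)}$ on the right, for which I would use Lemma \ref{lem0}: the rule $\mu\cdot\mu^{(k)}=(k+1)\mu^{(k+1)}$ from part (2) and the expansion $\iota_{\mu,\lambda}(\lambda+\mu)^{(k)}=\sum_{l\geq 0}\lambda^{(l)}\mu^{(k-l)}$ from part (3). The term $(T+h_b\lambda)[a_{\mu}b]$ contributes $(T+h_b\lambda)(a_{(m)}b)$, and $\mu[a_{\mu}b]$ contributes $m\,a_{(m-1)}b$; meanwhile $\iota_{\mu,\lambda}(h_a-1)\lambda[a_{\lambda+\mu}b]$ contributes $(h_a-1)\lambda\,a_{(m)}b$ and $-\iota_{\mu,\lambda}\mu[a_{\lambda+\mu}b]$ contributes $-m\lambda\,a_{(m)}b-m\,a_{(m-1)}b$. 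The two copies of $m\,a_{(m-1)}b$ cancel, which is the crux of the whole computation, leaving
\[
[L_{\lambda}(a_{(m)}b)]=\bigl(T+(h_a+h_b-1-m)\lambda\bigr)(a_{(m)}b).
\]
Setting $h_c=h_a+h_b-1-m_i$ turns the right-hand side into $(\partial+h_c\lambda)(a_{(m_i)}b)$, so $a_{(m_i)}b$ is primary of weight $h_c$, as claimed.

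The main obstacle is purely the coefficient bookkeeping in the extraction step: applying the $\iota_{\mu,\lambda}$ expansion and the multiplication rule $\mu\cdot\mu^{(k)}=(k+1)\mu^{(k+1)}$ with the right index ranges, checking that the unwanted $a_{(m-1)}b$ terms cancel, and confirming that no higher term $a_{(k)}b$ with $k>m$ or any cross-coset term can feed into $\mu^{(m)}$ — all of which rely on the maximality of $m_i$ in its coset. Everything else follows mechanically from \textnormal{i)}, \textnormal{ii)}, and the primality hypotheses.
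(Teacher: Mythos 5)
Your generic-case computation is correct and coincides with the paper's own argument: the collapsed identity you derive is exactly the paper's \eqref{eq5.2}, and your extraction of the coefficient of $\mu^{(m)}$ --- including the cancellation of the two $m\,a_{(m-1)}b$ terms and the coset bookkeeping via Lemma \ref{lem0} --- is a careful spelling-out of what the paper compresses into the single line ``the coefficient of $\mu^{(m_i)}$ on both sides gives us the identity.''

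However, there is a genuine gap: the extraction step is vacuous when $m_i\in\ZZ_{<0}$, and the lemma is asserted (and later needed, e.g.\ in Lemma \ref{lem4.3} and in the Virasoro application, where the case $m<0$ is treated explicitly) in that case as well. By the definition \eqref{fo} of the formal Fourier transform, $\mu^{(k)}=\mu^{k}/\Gamma(k+1)=0$ identically for $k\in\ZZ_{<0}$; so when the top exponent $m_i$ of the integer coset is a negative integer, none of the modes $a_{(m_i-n)}b$ appear in $[a_{\mu}b]$ at all --- in \eqref{eq5.1} those summands vanish --- and ``the coefficient of $\mu^{(m_i)}$'' does not exist on either side of your identity. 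Your maximality argument rules out contamination from other terms, but it cannot produce a coefficient of a monomial that is identically zero. The paper devotes the second half of its proof to precisely this case: by translation covariance $a_{(m_i)}b=(T^{(l)}a)\cdot b$ with $l=-m_i-1$, and one computes $[L_{\lambda}(T^{(l)}a)b]$ using the Wick-type formula \textnormal{v)} together with sesquilinearity \textnormal{i)}; here the integral term and all lower products $(T^{(k)}a)b=a_{(-1-k)}b$ with $0\leq k<l$ vanish because $m_i$ is the top index of its coset, leaving
\[
[L_{\lambda}(T^{(l)}a)b]=(h_a+h_b+l)\lambda\,(T^{(l)}a)b+T\bigl((T^{(l)}a)b\bigr)\, ,
\]
which is the claim, since $h_c=h_a+h_b-1-m_i=h_a+h_b+l$. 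You need to add this second branch (or an equivalent argument) for the proof to be complete.
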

\begin{proof} From Jacobi identity \textnormal{ii)} we have that 
\begin{equation}\label{eq5.2}
\begin{split}
[L_{\lambda}[a_{\mu}b]]&=\iota_{\mu, \lambda}[[L_{\lambda}a]_{\lambda+\mu}b]+[a_{\mu}[L_{\lambda}b]]\\
&=(h_a \lambda -(\lambda+\mu))\iota_{\mu, \lambda}[a_{\lambda+\mu}b]+(h_b \lambda+\mu+T)[a_{\mu}b]\, .
\end{split}
\end{equation}
The coefficient of $\mu^{(m_i)}$ on both sides gives us the identity. 
If $m_i< 0$ then $a_{(m_i)}b=(T^{(-m_i-1)}a)b$ and for $l:=-m_i-1$ 
\begin{align*}
[L_{\lambda}(T^{(l)}a)b]&=((\lambda+T)^{(l)}(\lambda h_a+T)a)b+T^{(l)}a(h_b\lambda+T)b\\
&=(h_a+h_b+l)\lambda (T^{(l)}a)b+T((T^{(l)}a)b)\, .
\end{align*}
\end{proof}


Now, a Verma module, for $h,c\in \CC$ is given by
\[M^{c}_{h}:=U(\mathfrak{vir})\otimes_{U(\bigoplus_{n\geq 0}\CC L_{n}\oplus \CC C )}m\]
where $C m=cm$, $L_0 m=hm$ and $L_{n}m=0$, $n\geq 1$. The vector $m$ is the highest weight vector of $M^{c}_{h}$. Let $L^{c}_{h}$ the quotient of the Verma modules $M^{c}_{h}$ by its unique non-trivial maximal submodule, see \cite{KRR}. We consider $c$ and $h$ as follows
\[c_{p,q}=1-\frac{6(p-q)^2}{pq}\]
\[h_{k,l}=\frac{(lp-kq)^2-(p-q)^2}{4pq}\]
where $(p,q)=1$, $0<k<p$, $0<l<q$. 

Now, it follows from a direct calculation that 
$\bigl(L_{-2}-\frac{3}{2(2h_{1,2}+1)}L_{-1}^{2}\bigr)b$
is a singular vector for $M^{c}_{h_{1,2}}$, where $b$ denotes the higuest weight vector. Using this we obtain the following result from \cite{BPZ}


\begin{proposition}\label{proBPZ} The intertwiners of type  $\binom{L^{c}_{h}}{L^c_{h_{k,l}}\quad L^c_{h_{1,2}} }$ are trivial unless 
\begin{equation}\label{bpz}0=-\kappa+h_{k,l} -\frac{3}{2(2h_{1,2}+1)}(\kappa)(\kappa-1)\end{equation}
where $\kappa=h-h_{k,l} -h_{1,2}$.
\end{proposition}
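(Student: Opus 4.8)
The plan is to read off \eqref{bpz} as the condition for the leading coefficient of $\Y$ to survive the level-two singular vector of $M_2 = L^c_{h_{1,2}}$. Let $a$ and $b$ be the highest weight vectors of $M_1 = L^c_{h_{k,l}}$ and $M_2 = L^c_{h_{1,2}}$, so that both are primary with $h_a = h_{k,l}$ and $h_b = h_{1,2}$. By \leref{pro5.1} the vector $\phi := a_{(m_i)}b$ with $m_i = h_{k,l}+h_{1,2}-h-1 = -\kappa-1$ is primary of weight $h$, hence a scalar multiple of the highest weight vector of $M_3 = L^c_h$. Since the image of $\Y$ is a $\text{Vir}^c$-submodule of the irreducible $M_3$, a nonzero $\Y$ is surjective and its one-dimensional weight-$h$ component is spanned by $\phi$; thus $\Y \neq 0$ forces $\phi \neq 0$, and it is enough to show that $\phi \neq 0$ implies \eqref{bpz}.

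First I would record the commutation relation coming from the Borcherds identity \eqref{borcherds2} with its parameter $n=0$ and $v=L$: since $a$ is primary only $L_{(0)}a = Ta$ and $L_{(1)}a = h_{k,l}a$ contribute, giving
\[[L_n, a_{(p)}] = \bigl((n+1)(h_{k,l}-1)-p\bigr)\,a_{(n+p)}, \qquad n\in\ZZ,\ p\in\CC.\]
I would then note the two vanishings that make the computation collapse: $a_{(m_i+1)}b$ and $a_{(m_i+2)}b$ lie in the weights $h-1$ and $h-2$ of $L^c_h$, both strictly below the lowest weight $h$, and therefore vanish.

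The heart of the argument is to apply the mode $a_{(q)}$ with $q := m_i+2 = 1-\kappa$ --- the unique mode carrying the weight-$(h_{1,2}+2)$ singular vector into the weight-$h$ subspace --- to the relation $\bigl(L_{-2}-\tfrac{3}{2(2h_{1,2}+1)}L_{-1}^2\bigr)b = 0$, which holds in $L^c_{h_{1,2}}$. Commuting $a_{(q)}$ past $L_{-2}$ and $L_{-1}$ by the relation above and discarding the vanishing terms gives $a_{(q)}L_{-2}b = (h_{k,l}-\kappa)\phi$ and $a_{(q)}L_{-1}^2 b = q(q-1)\phi = \kappa(\kappa-1)\phi$. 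Hence
\[0 = a_{(q)}\Bigl(L_{-2}-\tfrac{3}{2(2h_{1,2}+1)}L_{-1}^2\Bigr)b = \Bigl(-\kappa+h_{k,l}-\tfrac{3}{2(2h_{1,2}+1)}\kappa(\kappa-1)\Bigr)\phi,\]
so that either $\phi=0$ (the intertwiner is trivial) or \eqref{bpz} holds.

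The algebra is routine once the commutation relation --- valid for all $n\in\ZZ$ and all complex $p$ by \eqref{borcherds2}, equivalently \eqref{eq3.12} --- is in hand, so the main obstacle is the structural reduction of the first paragraph, namely that a nonzero intertwiner forces $\phi \neq 0$. This rests on the irreducibility of $M_3 = L^c_h$ (so the image of $\Y$ is all of $M_3$ or $0$) together with the $L_0$-grading of the modules, bounded below by $h$; it is precisely this grading that makes $a_{(m_i+1)}b$ and $a_{(m_i+2)}b$ vanish and that forces the weight-$h$ part of a nonzero image to be the line through $\phi$. I would set up this grading carefully before running the displayed computation.
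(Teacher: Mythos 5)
Your proof is correct, and it rests on the same key idea as the paper's --- pairing the intertwiner against the level-two singular vector $\bigl(L_{-2}-\beta L_{-1}^{2}\bigr)b=0$ of $L^{c}_{h_{1,2}}$, with $\beta=\tfrac{3}{2(2h_{1,2}+1)}$, and reading off the resulting scalar on the bottom weight space --- but it executes that idea with different machinery. The paper computes $0=[a_{\lambda}(L-\beta T^{2})b]$ inside its $\lambda$-bracket calculus, expanding by identity vii), sesquilinearity i), and the skew-symmetry $[a_{\lambda}L]=-[L_{-\lambda-T}a]$, then extracting the coefficient of $\lambda^{(m+2)}$; your application of the single mode $a_{(q)}$ with $q=m+2$, using the commutator $[L_n,a_{(p)}]=\bigl((n+1)(h_{k,l}-1)-p\bigr)a_{(n+p)}$ derived from \eqref{borcherds2}, is exactly that coefficient extraction done in classical mode language, and your coefficients $(h_{k,l}-\kappa)$ and $\kappa(\kappa-1)$ agree with the paper's $h_{k,l}+(m+1)-\beta(m+1)(m+2)$ under $\kappa=-1-m$. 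Your route buys two things: since the commutator formula holds for all $p\in\CC$, the weight argument (that $a_{(m+1)}b$ and $a_{(m+2)}b$ sit in weights $h-1$ and $h-2$, below the bottom of $L^{c}_{h}$) handles $m<0$ uniformly, whereas the paper treats that case separately via iv); and you state explicitly the structural reduction that a nonzero $\Y$ forces $\phi=a_{(m)}b\neq 0$, which the paper leaves implicit in \leref{pro5.1}'s assertion that $a_{(m)}b$ \emph{is} the highest weight vector of $L^{c}_{h}$. What the paper's route buys, and yours forgoes, is the demonstration that the identities i)--vii) alone suffice for such computations, which is the point of the formalism. One caveat: your claim that the weight-$h$ component of the image of $\Y$ is spanned by $\phi$ is only sketched --- making it precise requires reducing modes of descendants $L_{-j_1}\cdots L_{-j_k}a$ applied to descendants of $b$ to multiples of $\phi$ via the commutator and iterate identities \eqref{eq3.12} and \eqref{eq4.3} together with the vanishing of weights below $h$ --- but you flag this honestly, and the paper supplies no more detail on this point than you do.
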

\begin{proof} Let $a\in L^c_{h_{k,l}}$, $b\in L^c_{h_{1,2}}$ be the respective highest weight vectors.  From Lemma \ref{pro5.1}, we have that $a_{(m)}b$ is the highest weigh vector of $L^{c}_{h}$ for $m=-h+h_{1,2}+h_{k,l}-1$.

From skew-symmetry $[a_{\lambda}L]=-[L_{-\lambda-T}a]
=h_{k,l}\lambda a+(h_{k,l}-1)T a$. 
Let $\beta=\frac{3}{2(2h_{1,2}+1)}$. Then, we have 
\begin{align*}
0&=[{a}_{\lambda}\left(L-\beta T^{2}\right)b]=[{a}_{\lambda}Lb]-[{a}_{\lambda}\beta T^{2}b]\\
&=L[{a}_{\lambda}b]+[a_{\lambda}L]b+\int_0^{\lambda} (h_{k,l}\lambda-(h_{k,l}-1)\mu)[{a}_{\mu}b]d\mu -\beta(\lambda+ T)^{2}[{a}_{\lambda}b]\, .
\end{align*}
In particular, the coefficient of $\lambda^{(m+2)}$ for each term on the right-hand side above gives us
\begin{align*}
0=
h_{k,l}+(m+1)-\frac{3}{2(2h_{1,2}+1)}(m+1)(m +2)\, .
\end{align*}
Finally, we replace $\kappa=h-h_{k,l}-h_{1,2}= -1-m$. 

For $m<0$, we have using \textnormal{iv)} for $-m-1=l$ and $0=(T^{(l-2)}a)(L-\beta T^{2}b)$ the same identity is obtained. 

\end{proof}

The solutions of the second order equation \eqref{bpz} gives us $h=h_{k,l-1}$ or $h=h_{k,l+1}$. These solutions led the authors in \cite{BPZ} to formulate the fusion rules of the minimal models.




Now, we generalize the identity \eqref{bpz}. First, we prove the following Lemma for interwiners  of type $\binom{L^{c}_{h}}{L^c_{h_{k,l}}\quad L^c_{h_{r,s}} }$. 

\begin{lemma}\label{lem4.3} Let $a\in L^c_{h_{k,l}}$ be a highest weight vector and $b\in L^c_{h_{r,s}}$an arbitrary vector. For $j\geq -1$ 
\begin{equation*}
\begin{split}
[a_{\lambda}L_{-j-2}b]=\lambda^{(m+j+2)}\left((j+1)h_{k,l}+(m+1)\right)a_{(m)}b+\sum_{n\geq 1}\lambda^{(m+j+1-n)}\cdots 
\end{split}
\end{equation*}
where $m=-h+h_{r,s}+h_{k,l}-1$.
\end{lemma}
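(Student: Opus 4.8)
The plan is to extract the coefficients of $[a_{\lambda}L_{-j-2}b]=\sum_{N}\lambda^{(N)}a_{(N)}(L_{-j-2}b)$ one power of $\lambda$ at a time, which amounts to computing the modes $a_{(N)}(L_{-j-2}b)$; I will do this with the commutator identity \eqref{eq3.12}, the mode form of the Jacobi identity that also underlies the Wick formula \textnormal{vii)}. The starting observation is that $L_{-j-2}$ is a Virasoro mode acting on $M_2$, and since $Y(L,z)=\sum_{n}L_nz^{-n-2}=\sum_{n}L_{(n)}z^{-n-1}$ we have $L_{(n)}=L_{n-1}$, so $L_{-j-2}=L_{(-j-1)}$. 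Thus the task is to evaluate $a_{(N)}L_{(-j-1)}b$, and in particular its top instance $N=m+j+2$.

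The key step is to commute the intertwiner mode $a_{(N)}$ past the vertex-algebra mode $L_{(-j-1)}$ using \eqref{eq3.12} with $v=L$, $m=-j-1\in\ZZ$ and $k=N\in\CC$, which gives
\begin{equation*}
a_{(N)}L_{(-j-1)}b=L_{(-j-1)}a_{(N)}b-\sum_{i\geq 0}\binom{-j-1}{i}\bigl(L_{(i)}a\bigr)_{(N-j-1-i)}b\, .
\end{equation*}
Because $a$ is a highest weight vector, $L_{(i)}a=L_{i-1}a$ vanishes for $i\geq 2$, equals $h_{k,l}a$ for $i=1$ and equals $Ta$ for $i=0$, so only $i=0,1$ survive; using $\binom{-j-1}{1}=-(j+1)$ the formula collapses to
\begin{equation*}
a_{(N)}L_{(-j-1)}b=L_{(-j-1)}a_{(N)}b-(Ta)_{(N-j-1)}b+(j+1)h_{k,l}\,a_{(N-j-2)}b\, .
\end{equation*}

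For the leading coefficient I set $N=m+j+2$. Here $a_{(m+j+2)}b$ has $L_0$-weight $h-j-2<h$, hence vanishes in the lowest weight module $L^c_h$ (I take $b$ to be the highest weight vector, so that $a_{(N)}b=0$ for $N>m$, which is also what makes $m+j+2$ the top power), killing the term $L_{(-j-1)}a_{(m+j+2)}b$. Combining this with the sesquilinearity relation $(Ta)_{(p)}=-p\,a_{(p-1)}$ coming from Definition \ref{defint}(i), so that $(Ta)_{(m+1)}b=-(m+1)a_{(m)}b$, yields
\begin{equation*}
a_{(m+j+2)}(L_{-j-2}b)=\bigl((j+1)h_{k,l}+(m+1)\bigr)a_{(m)}b\, ,
\end{equation*}
precisely the stated coefficient of $\lambda^{(m+j+2)}$. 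The remaining terms $\sum_{n\geq 1}\lambda^{(m+j+1-n)}\cdots$ are obtained from the same collapsed formula at $N=m+j+2-n$, where now $L_{(-j-1)}a_{(N)}b$ need not vanish and feeds into the unspecified lower-order contributions.

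The main obstacle is bookkeeping rather than conceptual: one must apply \eqref{eq3.12} correctly at the complex mode index $N$, track the signs in $\binom{-j-1}{i}$ and in $(Ta)_{(p)}=-p\,a_{(p-1)}$, and, above all, justify the weight-grading vanishing $a_{(m+j+2)}b=0$. This last point is what pins down $m+j+2$ as the top power and relies on $b$ being the highest weight vector of $L^c_{h_{r,s}}$ and on $L^c_h$ having $L_0$-weights bounded below by $h$.
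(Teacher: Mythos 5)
Your proof is correct for the leading coefficient, and it is essentially the Fourier-coefficient shadow of the paper's argument, carried out in a different formalism. The paper stays inside the $\lambda$-bracket calculus it is advertising: it writes $L_{-j-2}b=(T^{(j)}L)\cdot b$, applies the Wick-type identity vii) with $v=T^{(j)}L$, computes $[a_{\lambda}L]=h_{k,l}\lambda a+(h_{k,l}-1)Ta$ from skew-symmetry and primaryness, and extracts the top power of $\lambda$ from the integral term via Lemma \ref{lem3.4}, treating $j=-1$ separately through sesquilinearity i). You instead work at the level of modes with the commutator identity \eqref{eq3.12} (that is, \eqref{borcherds2} at $n=0$) at the negative index $-j-1$, where primaryness truncates the sum to $i=0,1$ and $(Ta)_{(p)}=-p\,a_{(p-1)}$ supplies the $(m+1)$. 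Since vii) and its translation-covariant extensions are exactly the generating-function form of \eqref{eq3.12} for negative first index, the two computations are term-by-term equivalent; what your route buys is a uniform treatment of $j\geq -1$ (the $i=1$ term carries the factor $j+1$, which vanishes precisely at $j=-1$) and the replacement of the integral identity of Lemma \ref{lem3.4} by a single binomial coefficient, at the cost of bypassing the $\lambda$-formalism the paper is demonstrating.

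One caveat: your vanishing argument $a_{(m+j+2)}b=0$, which pins down $\lambda^{(m+j+2)}$ as the top power, uses $b$ equal to the highest weight vector, while the lemma allows arbitrary $b\in L^{c}_{h_{r,s}}$. For $b$ of depth $d>0$ the term $L_{(-j-1)}a_{(N)}b$ that you discard no longer vanishes at the relevant indices, powers above $\lambda^{(m+j+2)}$ occur, and the true top term is $\lambda^{(m+d+j+2)}\bigl((j+1)h_{k,l}+(m+d+1)\bigr)a_{(m+d)}b$; this depth shift is exactly the dependence on $j$ in the module action $l_{-i}f_{j}=(\mu+j-\lambda(i+1))f_{j+i}$ of Theorem \ref{th4.3}, and it is what the iterated application to $\sigma_{r,s}(t)b$ actually requires. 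Your computation extends verbatim with $m$ replaced by $m+d$, and the paper's own proof is no more explicit on this point, so this is a defect of the statement's bookkeeping rather than of your argument. Note also that both your computation and the paper's proof produce a subleading term at $\lambda^{(m+j+1)}$, so the statement's lower-order sum $\sum_{n\geq 1}\lambda^{(m+j+1-n)}\cdots$, which skips that power, is evidently a typo and immaterial, since only the leading coefficient is used downstream.
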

\begin{proof}
For $j\geq 0$ we have that
\[[a_{\lambda}L_{-j-2}b]=T^{(j)}L[a_{\lambda}b]+[a_{\lambda}T^{(j)}L]b+\int^{\lambda}_{0}[[a_{\lambda}T^{(j)}L]_{\mu}b]d\mu\]
for $a\in L^c_{h_{k,l}}$, $b\in L^c_{h_{r,s}}$. 
 Then because $a\in L^c_{h_{k,l}} $ is primary we have using \textnormal{i)} and Lemma \ref{lem3.4} on the integral term above that 
\[[a_{\lambda}L_{-j-2}b]=\lambda^{(m+j+2)}\left((j+1)h_{k,l}+(m+1)\right)a_{(m)}b+\lambda^{(m+j+1)}\cdots \]
Where,  $m=-h+h_{r,s}+h_{k,l}-1$ from Lemma \ref{pro5.1}. Finally, using \textnormal{i)} we have that the identity is also satisfied for $j=-1$.
\end{proof}

Let $b\in L^c_{h_{r,s}}$ be the highest weight vector. From Kac determinant formula, see \cite{KRR}, and the submodule structure \cite{FF}, we have that there is a singular vector of $M^{c}_{r,s}$ given by
\[\sigma_{r,s}(t)b=\sum_{\substack{j_1\geq \cdots \geq j_l\geq 1 \\ j_1+\cdots +j_l=rs}}\rho_{j_1, \cdots , j_l}(t)L_{-j_1}\cdots L_{-j_l}b\]
where $t=-q/p$, and $\rho_{j_1, \cdots , j_l}(t)\in \CC$. 

Recall that the Witt algebra is $\bigoplus_{n\in \ZZ}\CC l_{n}$ where $[l_n, l_m]=(n-m)l_{n+m}$.
 




\begin{theorem}\label{th4.3}\cite{FF, FF1} Let $\F_{\lambda, \mu}$ be a Witt algebra module with base $f_j$ ($j\in \ZZ$) and  action given by $l_{-i}f_j=(\mu+j-\lambda (i+1))f_{j+i}$. Define the function $\rho_{r,s}(\lambda, \mu, t)$ by the formula $\sigma_{r,s}(t)	f_{0}=\rho_{r,s}(\lambda, \mu, t)f_{rs}$. Then 
\begin{align*}
&\rho_{r,s}(\lambda, \mu, t)^{2}=\prod_{\substack{0\leq u<r \\ 0\leq v<s}} R_{r,s,u,v}(\lambda, \mu, t)\, , 
\end{align*}
\vspace{-0,4cm}
{\small \begin{align*}
&R_{r,s,u,v}(\lambda, \mu, t):=(\mu-2\lambda)^2+(\mu-2\lambda)(rs-(r-1-2u)(s-1-2v)-1)\\
&+(\mu-2\lambda)((2u(r-1-u)+r-1)t+(2v(s-1-v)+s-1)t^{-1})\\
&-\lambda((r-1-2u)^2 t+2(r-1-2u)(s-1-2v)+(s-1-2v)^2t^{-1})\\
&+(ut+v)((u+1)t+(v+1))((r-u)t+(s-v))((r-1-u)t+(s-1-v))t^{-2}\, .
\end{align*}}
\end{theorem}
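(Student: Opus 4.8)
The plan is to reduce the computation of $\rho_{r,s}(\lambda,\mu,t)$ to a statement purely about the Witt algebra acting on $\F_{\lambda,\mu}$, and then to evaluate the singular-vector operator $\sigma_{r,s}(t)$ by a free-field realization. First I would record that $\F_{\lambda,\mu}$ is the familiar intermediate-series (tensor density) module: identifying $f_j$ with the monomial density $x^{\mu+j}(dx)^{-\lambda}$ and letting $l_n$ act as the vector field $x^{1-n}\partial_x$, the Lie-derivative computation $\mathcal{L}_{l_{-i}}\bigl(x^{\mu+j}(dx)^{-\lambda}\bigr)=(\mu+j-\lambda(i+1))\,x^{\mu+j+i}(dx)^{-\lambda}$ reproduces exactly the prescribed action. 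Since $\sigma_{r,s}(t)$ raises the index by $rs$, its value on $f_0$ is automatically a scalar multiple of $f_{rs}$, so $\rho_{r,s}(\lambda,\mu,t)$ is well defined; from the $l_{-1}^{rs}$ summand of $\sigma_{r,s}(t)$ one sees it is a polynomial in $\mu$ of degree $rs$ (with coefficients Laurent in $t$), matching the degree $2rs$ in $\mu$ of the right-hand side once squared.

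The heart of the argument is the Feigin--Fuchs free-field (Fock space/screening) realization of $\sigma_{r,s}(t)$. I would place the Verma module $M^{c}_{h_{r,s}}$ inside a bosonic Fock space with background charge fixed so that $c=13-6(t+t^{-1})$ and $h_{r,s}$ is the momentum labelled by $(r,s)$, and express the singular vector through iterated contour integrals of the two dual screening currents. Passing to $\F_{\lambda,\mu}$ amounts to retaining only the Heisenberg zero mode, so that the screening integrals degenerate into a finite product indexed by the screening positions $(u,v)$ with $0\le u<r$ and $0\le v<s$. Each position contributes the momentum-dependent quadratic $R_{r,s,u,v}(\lambda,\mu,t)$, and the Feigin--Fuchs evaluation produces $\rho_{r,s}$ essentially as a contravariant-form quantity, so that its \emph{square} is the natural output and one obtains $\rho_{r,s}^{2}=\prod_{u,v}R_{r,s,u,v}$. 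The substitution identifying the Fock momenta with $(\lambda,\mu,t)$ is the same one visible in Lemma~\ref{lem4.3}, where the leading $\lambda$-coefficient of $[a_{\lambda}L_{-j-2}b]$ already exhibits the density-module action with $\lambda$ playing the role of the conformal weight.

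As a cross-check, and as an alternative route, I would verify the identity by its zero locus. Both sides are polynomials in $\mu$ (for generic $\lambda,t$) of degree $2rs$, so it suffices to match leading coefficients and to show that the $2rs$ roots agree with multiplicity. The roots of the right-hand side are the vanishing loci of the factors $R_{r,s,u,v}$, which coincide with the reducibility (degenerate fusion) conditions for $\F_{\lambda,\mu}$, generalizing the mechanism of Proposition~\ref{proBPZ}: each such condition forces $\sigma_{r,s}(t)f_0=0$, hence $\rho_{r,s}=0$. A degree-and-multiplicity count, together with the normalization fixed by the $l_{-1}^{rs}$ term, then upgrades this inclusion of zero sets into the full equality.

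I expect the main obstacle to be the second step: carrying out the multiple-screening (Dotsenko--Fateev) computation and proving that the combinatorics collapses to the completely factorized rectangle $\prod_{u,v}R_{r,s,u,v}$, including the intrinsic appearance of the overall square. Controlling the normalization of $\sigma_{r,s}(t)$, which is defined only up to scalar, so that the two sides agree exactly, and justifying that the density-module degeneration loses no information about $\rho_{r,s}$, are the delicate points. Everything else, namely the elementary tensor-density action and the parameter substitution, is routine and already implicit in Lemma~\ref{pro5.1} and Lemma~\ref{lem4.3}.
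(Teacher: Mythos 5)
You should first note that the paper contains no proof of Theorem \ref{th4.3} at all: it is imported verbatim from Feigin--Fuchs \cite{FF,FF1} and used as a black box in the subsequent computation of $[a_{\lambda}\sigma_{r,s}(t)b]$, so there is no internal argument to compare yours against; your proposal has to stand on its own as a reconstruction of the classical proof. Its first step is sound: the identification $f_j\leftrightarrow x^{\mu+j}(dx)^{-\lambda}$ with $l_{-i}=x^{i+1}\partial_x$ does reproduce the stated action, $\sigma_{r,s}(t)$ raises the index by exactly $rs$ so $\rho_{r,s}$ is well defined, and the $L_{-1}^{rs}$ summand controls the degree and normalization in $\mu$.

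The genuine gap is that the heart of the theorem --- actually evaluating $\sigma_{r,s}(t)f_0$ --- is never carried out in either of your two routes. In the screening route, the assertions that ``the screening integrals degenerate into a finite product indexed by $(u,v)$'' and that ``each position contributes $R_{r,s,u,v}$'' are restatements of the conclusion, not derivations: the factorized answer comes out of a Selberg-type (Dotsenko--Fateev) integral evaluation that you explicitly defer, and ``retaining only the Heisenberg zero mode'' is not a precise mechanism for passing from the Fock realization to $\F_{\lambda,\mu}$. The zero-locus fallback likewise assumes its own key step: that $R_{r,s,u,v}=0$ forces $\sigma_{r,s}(t)f_0=0$ is precisely the degenerate-case analysis of Feigin--Fuchs (existence of nontrivial morphisms at those parameter values), for which you offer no argument. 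The multiplicity bookkeeping is also subtler than ``roots agree with multiplicity'': one checks directly that $R_{r,s,u,v}=R_{r,s,r-1-u,s-1-v}$, so the factors on the right pair up under $(u,v)\mapsto(r-1-u,s-1-v)$ and the product is generically the square of a degree-$rs$ polynomial in $\mu$, with a self-paired central factor when $r,s$ are both odd which must itself be a perfect square (e.g. $R_{1,1,0,0}=(\mu-2\lambda)^2$, matching $\rho_{1,1}=\mu-2\lambda$). It is this pairing symmetry, not a ``contravariant-form'' principle as you suggest, that accounts for the appearance of $\rho_{r,s}^2$ rather than $\rho_{r,s}$; your proposal does not observe it, and without either the integral evaluation or the degeneracy analysis the proposal remains a plan rather than a proof.
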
 

Therefore, we have from Lemma \ref{lem4.3} and Theorem \ref{th4.3} on intertwiners of type $\binom{L^{c}_{h}}{L^c_{h_{k,l}}\quad L^c_{h_{r,s}} }$ that
\begin{equation*}
\begin{split}
0=[a_{\lambda}\sigma_{r,s}(t)b]&=\lambda^{(m+rs)}\rho_{r,s}(-h_{k,l}, h_{r,s}-h-h_{k,l}, -{q}/{p})a_{(m)}b+\lambda^{(m+rs-1)}\cdots 
\end{split}
\end{equation*}
Hence, the generalization of identity \eqref{bpz} is given by
\[\rho_{r,s}(-h_{k,l}, h_{r,s}-h-h_{k,l}, -{q}/{p})=0\,. \]

Finally, we note that  
\begin{align*}
R_{r,s,u,v}(-h_{k,l},& h_{r,s}-h-h_{k,l}, -{q}/{p})=h_{r-2u, s-2v}h_{r-2(u+1), s-2(v+1)}\\
&-\frac{((r-2u-1)q-(s-2v-1)p)^2}{2pq}(h_{k,l}+h)+(h_{k,l}-h)^{2}
\end{align*}
with solutions given by $h=h_{k-r+1+2u,l-s+1+2v}$ and $h=h_{k+r-1-2u,l+s-1-2v}$. 

\subsection{Affine Kac-Moody algebra}

Let $\lieg$ be a finite dimensional complex simple Lie algebra with Cartan subalgebra $\lieh$. We consider the invariant form $(a, b)=\frac{1}{2h^{\vee}}\text{Tr}\left(\ad(a)\ad(b)\right)$ for $a,b\in \lieg$, and $h^{\vee}$ the dual Coxeter number. 
The Casimir operator $C:=\sum x_{i}x^{i}$, where $x_i, x^i$ are dual basis. Recall that on a highest weight module $E$ of highest weight $\alpha\in \lieh^{*}$, we have that 
\[C|_{E}=(\alpha, \alpha+2\rho)\text{Id}_{E}.\]

The affine Kac-Moody Lie algebra associated to $\lieg$ is the Lie algebra $\hat{\mathfrak{g}}=\lieg[t,t^{-1}]\oplus \CC K$ with commutations relations 
\[[xt^n, yt^m]=[x,y]t^{m+n}+n(x,y)\delta_{n,-m}K, \qquad [\hat{\lieg} , K]=0\, .\]


Now, we have  the space
\[V^{k}(\lieg):=U(\hat{\mathfrak{g}})\otimes_{U( \lieg[t]\oplus \CC K )}\CC_{k}\, , \]
where $K$ acts by $k$ and $\lieg[t]t$ acts by zero.
 $V^k(\lieg)$ is the universal affine vertex algebra, where $\vac=1\otimes 1$, $Y:V^k(\lieg)\rightarrow \Hom (V^k(\lieg), V^k(\lieg)((z)))$ given by
\[Y((x_1t^{-n_1-1})\cdots (x_rt^{-n_r-1})\vac_{}, z):=:\partial^{(n_1)}_{z}x_1(z)\cdots \partial^{(n_r)}_{z}x_r(z):\]
where $x(z)=\sum_{n\in \ZZ}(xt^n) z^{-n-1}$ for $x\in \lieg$ and $:\, :$ denotes the normally ordered product. 

For $J_{x}=(xt^{-1})\vac\in V^{k}(\lieg)$ the $\lambda$-bracket, see \eqref{lambdava}, is given by 
\begin{equation}\label{eqlie}[{J_{x}}_{\lambda} J_{y}]=J_{[x,y]}+k(x,y)\lambda\, . 
\end{equation}
Let $M$ be a $V^k(\lieg)$-module, and $a\in M$ such that  if $xt^{n}a=0$ for $n>0$ and we denote $(xt^0) a=xa$. Using the $\lambda$-bracket, see \eqref{lambdam}, we have equivalently
\begin{equation}\label{eq4sin}[{J_{x}}_{\lambda}a]=(xa)\, .\end{equation}
We describe the intertwiners of $V^k(\lieg)$ as \eqref{eq4.2int} and \eqref{eq5.1}
\begin{lemma}  
\label{pro5.6} Let $a\in M_{1}$, $b\in M_{2}$ satisfying \eqref{eq4sin}. Then 
\[[{J_{x}}_{\lambda}{a}_{(m_i)}b]={(x a)}_{(m_i)}b+a_{(m_i)}{(x b)}\, , \]
i.e. ${a}_{(m_i)}b$ satisfies \eqref{eq4sin} with $(xt^0) a_{(m_i)}b:={(x a)}_{(m_i)}b+a_{(m_i)}{(x b)}$. 
\end{lemma}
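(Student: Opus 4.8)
The plan is to follow the proof of Lemma \ref{pro5.1}, replacing the Virasoro bracket with the affine one and extracting a single coefficient in $\mu$. The key simplification over the Virasoro case is that, by \eqref{eq4sin}, both $[{J_{x}}_{\lambda}a]=(xa)$ and $[{J_{x}}_{\lambda}b]=(xb)$ are independent of $\lambda$, whereas there the analogous brackets carried a factor $T+h\lambda$.

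First I would apply the Jacobi identity \textnormal{ii)} with $v={J_{x}}$ to get
\[
[{J_{x}}_{\lambda}[a_{\mu}b]]=[a_{\mu}[{J_{x}}_{\lambda}b]]+\iota_{\mu,\lambda}[[{J_{x}}_{\lambda}a]_{\lambda+\mu}b],
\]
and substitute \eqref{eq4sin} into the two inner brackets, obtaining
\[
[{J_{x}}_{\lambda}[a_{\mu}b]]=[a_{\mu}(xb)]+\iota_{\mu,\lambda}[(xa)_{\lambda+\mu}b].
\]
Then I would expand every intertwiner bracket as in \eqref{eq5.1}, for instance $[a_{\mu}b]=\sum_{i}\sum_{n\geq0}\mu^{(m_{i}-n)}a_{(m_{i}-n)}b$, and rewrite the last term using Lemma \ref{lem0}(3) as $\iota_{\mu,\lambda}(\lambda+\mu)^{(m_{i}-n)}=\sum_{l\geq0}\lambda^{(l)}\mu^{(m_{i}-n-l)}$.

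The heart of the argument is to compare the coefficients of the leading monomial $\mu^{(m_{i})}$ on both sides, for a fixed top mode $m_{i}\notin\ZZ_{<0}$ (so that $\mu^{(m_{i})}\neq0$); terms from other congruence classes $m_{j}$ do not interfere since $m_{j}-n\not\equiv m_{i}\pmod{\ZZ}$. On the left this coefficient is $[{J_{x}}_{\lambda}a_{(m_{i})}b]$, and in $[a_{\mu}(xb)]$ it is $a_{(m_{i})}(xb)$. In the last term the requirement $m_{i}-n-l=m_{i}$ with $n,l\geq0$ forces $n=l=0$, so only $\lambda^{(0)}(xa)_{(m_{i})}b=(xa)_{(m_{i})}b$ survives. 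I expect this to be the crux: matching the top power of $\mu$ kills every positive power of $\lambda$, which is precisely why $[{J_{x}}_{\lambda}a_{(m_{i})}b]$ turns out $\lambda$-independent and equal to $(xa)_{(m_{i})}b+a_{(m_{i})}(xb)$, i.e. $a_{(m_{i})}b$ again obeys \eqref{eq4sin}.

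It remains to treat a degenerate top mode $m_{i}\in\ZZ_{<0}$, where $\mu^{(m_{i})}=0$ and the coefficient comparison is vacuous. Here I would instead write $a_{(m_{i})}b=(T^{(-m_{i}-1)}a)\cdot b$ and compute $[{J_{x}}_{\lambda}((T^{(l)}a)\cdot b)]$ via the Wick-type formula \textnormal{v)} together with sesquilinearity \textnormal{i)}, exactly as in the closing computation of the proof of Lemma \ref{pro5.1}. This case is only bookkeeping; the substantive content lies in the coefficient extraction above.
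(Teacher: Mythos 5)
Your proposal is correct and follows essentially the same route as the paper's proof: apply the Jacobi identity \textnormal{ii)} with $v=J_x$, substitute \eqref{eq4sin} into the inner brackets, and extract the coefficient of $\mu^{(m_i)}$ on both sides, treating $m_i\in\ZZ_{<0}$ separately by writing $a_{(m_i)}b=(T^{(-m_i-1)}a)\cdot b$ and computing via \textnormal{v)} and sesquilinearity \textnormal{i)}, just as in the closing step of Lemma \ref{pro5.1}. Your explicit justification that only $n=l=0$ survives in the $\iota_{\mu,\lambda}[(xa)_{\lambda+\mu}b]$ term (hence the $\lambda$-independence of $[{J_x}_\lambda a_{(m_i)}b]$) merely fills in details the paper leaves implicit.
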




\begin{proof} It follows from \textnormal{ii)} that 
\begin{align*}
&[{J_{g}}_{\lambda}[a_{\mu}b]]=\iota_{\mu, \lambda}[{ga}_{\lambda+\mu}b]+[a_{\mu}g b]
\end{align*}
The coefficient of $\mu^{(m_i)}$ on both sides gives us the identity. 
If $m_i< 0$ then $a_{(m_i)}b=(T^{(-m_i-1)}a)b$ and for $l:=-m_i-1$ 
\[[{J_{x}}_{\lambda}(T^{(l)}a)b]=\left((\lambda+T)^{(l)}(xa)\right)b+T^{(l)}a(xb)= (T^{(l)}xa)b+(T^{(l)}a)xb\, .\]

\end{proof}

For a highest weight $\lieg$-module $E$ of highest weight $\alpha\in \lieh^{*}$  a \emph{Weyl module} is a $\hat{\lieg}$-module $M^{k}_{E}$ satisfying that $K$ acts by $k$ and  
\[U(\hat{\lieg})E=M^{k}_{E}, \quad (\lieg[t]t) E=0 , \quad  (x t^0) E=x  E  \, .\]

From the Segal-Sugawara construction, 
we have for $k\neq -h^{\vee}$ that 
\[L:= \frac{1}{2(k+h^{\vee})}\sum_i{J_{x_i}J_{x^i}}\in V^{k}(\lieg)\]
 satisfies \eqref{eqvir} with $c=\frac{k\dim \lieg}{(k+h^{\vee})}$. 

The next lemma is known in the literature, see \cite{KZ} 
\begin{lemma}\label{lem5.4} Let $M^{k}_{E}$ as above. For $k\neq -h^{\vee}$, we have for $a\in E$ that 
\[[L_{\lambda}a]=\frac{1}{(k+h^{\vee})}J_{g_i} (g^ia)+\lambda h_{a}a\, \quad\text{where}\quad h_{a}:=\frac{(\alpha,\alpha+2\rho) }{2(k+h^{\vee})} \]
\end{lemma}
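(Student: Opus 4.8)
The plan is to expand the Sugawara vector $L=\frac{1}{2(k+h^\vee)}\sum_i J_{x_i}J_{x^i}$, in which each $J_{x_i}J_{x^i}=(J_{x_i})_{(-1)}J_{x^i}$ is the normally ordered (that is, $(-1)$-) product, and to compute $[L_\lambda a]$ by applying the noncommutative Wick formula for $V$-modules recorded in Section \ref{pre} (the module analogue of vi)) to each summand $[J_{x_i}J_{x^i}{}_\lambda a]$. With $u=J_{x_i}$ and $v=J_{x^i}$ this produces three contributions, namely $(e^{T\partial_\lambda}J_{x_i})[{J_{x^i}}_\lambda a]$, $(e^{T\partial_\lambda}J_{x^i})[{J_{x_i}}_\lambda a]$, and $\int_0^\lambda[{J_{x_i}}_\mu[{J_{x^i}}_{\lambda-\mu}a]]\,d\mu$, which I would treat separately.

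The key simplification comes from \eqref{eq4sin}: for $a\in E$ one has $[{J_x}_\lambda a]=xa$, which is independent of $\lambda$. First I would use this on the two boundary terms. Since $[{J_{x^i}}_\lambda a]=x^i a$ carries no $\lambda$, every term of $e^{T\partial_\lambda}$ involving a positive power of $\partial_\lambda$ annihilates it, so $(e^{T\partial_\lambda}J_{x_i})[{J_{x^i}}_\lambda a]=J_{x_i}(x^i a)$, and symmetrically for the second term. Summing over $i$ and using that the Casimir tensor $\sum_i x_i\otimes x^i$ is symmetric (dual bases for a symmetric invariant form), the two terms coincide and together give $2\sum_i J_{x_i}(x^i a)$.

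For the integral term, I would note that $E$ is stable under the $\lieg=\lieg t^0$ action, so $x^i a\in E$ again satisfies \eqref{eq4sin}; hence $[{J_{x^i}}_{\lambda-\mu}a]=x^i a$, and then $[{J_{x_i}}_\mu(x^i a)]=x_i(x^i a)$, the plain $\lieg$-module action, which is still constant in $\mu$. The integral of this constant is $\lambda$ times it, using $\int_0^\lambda\mu^{(0)}\,d\mu=\lambda^{(1)}=\lambda$ from \eqref{defintegral}, so summing over $i$ yields $\lambda\,\sum_i x_i x^i a=\lambda\,Ca$, where $C$ is the Casimir. Since $a\in E$ and $C$ acts on the highest weight module $E$ as the scalar $(\alpha,\alpha+2\rho)$, this equals $\lambda(\alpha,\alpha+2\rho)a$.

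Assembling the three pieces and dividing by $2(k+h^\vee)$ gives $[L_\lambda a]=\frac{1}{k+h^\vee}\sum_i J_{x_i}(x^i a)+\lambda\frac{(\alpha,\alpha+2\rho)}{2(k+h^\vee)}a$, which is the claimed formula after relabeling the dual bases as $g_i,g^i$. The main obstacle is bookkeeping rather than conceptual: correctly interpreting $e^{T\partial_\lambda}$, with $T$ acting on the first vertex-algebra argument and $\partial_\lambda$ on the bracket, and justifying each reapplication of \eqref{eq4sin}, in particular verifying that $x^i a$ remains a vector of $E$ so that the inner bracket inside the integral again collapses to a finite $\lieg$-action; once these points are settled, the Casimir identity closes the computation.
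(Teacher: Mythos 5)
Your proof is correct, but it takes a genuinely different route from the paper's. You apply the left Wick formula for modules (the module analogue of vi), $[uv_{\lambda}a]=(e^{T\partial_{\lambda}}u)[v_{\lambda}a]+(e^{T\partial_{\lambda}}v)[u_{\lambda}a]+\int_{0}^{\lambda}[u_{\mu}[v_{\lambda-\mu}a]]d\mu$) directly to $[{J_{x_i}J_{x^i}}_{\lambda}a]$, and everything collapses because $[{J_x}_{\lambda}a]=xa$ is $\lambda$-independent on $E$ by \eqref{eq4sin}: the exponentials $e^{T\partial_{\lambda}}$ reduce to their constant term, the two boundary terms coincide by symmetry of the Casimir tensor $\sum_i x_i\otimes x^i$, and the integral of the constant $\sum_i x_i(x^i a)=Ca$ produces the $\lambda(\alpha,\alpha+2\rho)a$ term. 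The paper instead computes the reversed bracket $[a_{\lambda}J_{g_i}J_{g^i}]$ via the right Wick formula (the analogue of vii)), using the skew-symmetry $[a_{\lambda}J_g]=-ga$, and must then invoke quasicommutativity to trade $(g_i a)J_{g^i}$ for $J_{g_i}(g^i a)-(\alpha,\alpha+2\rho)Ta$ before flipping back with $[v_{\lambda}a]=-[a_{-\lambda-T}v]$; the $Ta$ terms from skew-symmetry and from quasicommutativity cancel at the end. Your route is shorter and avoids skew-symmetry and quasicommutativity altogether, at the price of invoking the less elementary identity vi); the paper's route showcases the interplay of the Wick formula with quasicommutativity, which is arguably the point of the applications section. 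Your two flagged checks are exactly the right ones and both hold: $E$ is a $\lieg$-submodule of the Weyl module $M^k_E$ (so $x^i a\in E$ again satisfies \eqref{eq4sin}, and the inner bracket in the integral is constant), and $C$ acts on the highest weight module $E$ by the scalar $(\alpha,\alpha+2\rho)$, as recalled in the paper.
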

\begin{proof}
 From noncommutative Wick theorem we have
\begin{align*}
[a_{\lambda}J_{g_i}J_{g^i}]&=-{(g_i a)}J_{g^i}-J_{g_i}{(g^i a)}-\int^{\lambda}_{0}[{{g_i a}}_{\mu}J_{g^i}]d\mu\\
&=-{g_i a}J_{g^i}-J_{g_i}{g^i a}+\lambda (\alpha,\alpha+2\rho) a
\end{align*}
where we used the skew-symmetry $[a_{\lambda}J_g]=-ga$. Additionally, from quasicommutativity we have that 
\[{g_i a}J_{g^i}-J_{g_i}{g^i a}=\int^{0}_{-T}[{g_i a}_{\lambda}J_{g^i}]=-(\alpha,\alpha+2\rho)T a\,. \]
Then $[{J_{g_i}J_{g^i}}_{\lambda}a]
=2J_{g_i}{(g^i a)}+\lambda(\alpha,\alpha+2\rho){ a}$.
\end{proof}

We assume that $E$ is $\lieg$-irreducible finite dimensional with highest weight $\alpha$. Let $L^{k}_{\alpha}$ the quotient of a modules $M^{k}_{E}$ by its unique non-trivial maximal submodule, see \cite{K1}. Now, we assume  
\[k\in \ZZ_{\geq 0}\,,  \qquad (\alpha, \theta)\leq k\, .\]
It follows from \cite[Lem 10.1]{K1} that $\left(e_{\theta}t^{-1}\right)^{k-(\alpha, \theta)+1}b$ is a singular vector for $M^{k}_{E}$, where $b\in M^{k}_{E}$ denotes its higuest weight vector.

Now, we use the notation $E_{\alpha_1}\otimes E_{\alpha_2}=\bigoplus E_{\alpha_i}$ the decomposition on irreducibles. Then  obtain the following identity from \cite{GW}.


\begin{proposition}\label{proBPZ} The intertwiners of type  $\binom{L^{k}_{\alpha}}{L^k_{\alpha_1}\quad L^k_{\alpha_2} }$ are trivial unless $E_{\alpha}=E_{\alpha_i}$ for some $i$ and $\forall a\in E_1$ and $b$ the highest weight vector of $E_2$
\begin{equation}\label{gw}(e^{k-(\alpha_2,\theta)+1}_{\theta}a)\otimes b|_{E_{\alpha}}\, =0\, .\end{equation}
\end{proposition}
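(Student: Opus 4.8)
The plan is to separate the two assertions and handle each with the $\lambda$-bracket calculus developed above, mirroring the Virasoro computation earlier in this section. Throughout set $h_\beta=(\beta,\beta+2\rho)/2(k+h^{\vee})$ for the conformal weight of the top $E_\beta$ (supplied by the Segal--Sugawara vector $L$), and put $m:=h_{\alpha_1}+h_{\alpha_2}-h_\alpha-1$. For tops $a\in E_1=E_{\alpha_1}$ and $b\in E_2=E_{\alpha_2}$ the weight of $a_{(n)}b$ is $h_{\alpha_1}+h_{\alpha_2}-n-1$, so $a_{(n)}b=0$ for $n>m$ and $a_{(m)}b$ is precisely the coefficient landing in the top $E_\alpha\subset L^k_\alpha$. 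By Lemma~\ref{pro5.6} the assignment $\Phi\colon a\otimes b\mapsto a_{(m)}b$ is a $\lieg$-module map $E_{\alpha_1}\otimes E_{\alpha_2}\to E_\alpha$. A nontrivial intertwiner has nonzero leading (top-to-top) map $\Phi$; since $E_\alpha$ is irreducible and finite-dimensional $\lieg$-modules are semisimple, this forces $E_\alpha$ to be a summand of $E_{\alpha_1}\otimes E_{\alpha_2}$, i.e. $E_\alpha=E_{\alpha_i}$ for some $i$. This gives the first condition.

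For the second, fix $a\in E_1$ and let $b$ be the highest weight vector of $E_2$. Put $N:=k-(\alpha_2,\theta)+1$ and $c_j:=(e_\theta t^{-1})^jb=J_{e_\theta}\cdot c_{j-1}$ with $c_0=b$, so that $c_N$ is Kac's singular vector, which vanishes in $L^k_{\alpha_2}$; hence $[a_\lambda c_N]=0$. I expand $[a_\lambda c_N]$ by stripping off one factor $J_{e_\theta}$ at a time via the Wick formula \textnormal{vii)} together with the skew-symmetry $[a_\lambda J_{e_\theta}]=-e_\theta a$, which is $\lambda$-independent because $a$ satisfies \eqref{eq4sin}. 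Each step yields
\[
[a_\lambda c_j]=J_{e_\theta}[a_\lambda c_{j-1}]-(e_\theta a)c_{j-1}-\int_0^\lambda[(e_\theta a)_\mu c_{j-1}]\,d\mu .
\]
Tracking the top power of $\lambda$ and using the $L_0$-grading, the three terms reach only orders $\lambda^{(m+j-1)}$, $\lambda^{(0)}$ and $\lambda^{(m+j)}$ respectively, so the leading $\lambda^{(m+j)}$-coefficient comes solely from the integral; by $\int_0^\lambda\mu^{(m+j-1)}d\mu=\lambda^{(m+j)}$ this gives the recursion $Q_j(a)=-Q_{j-1}(e_\theta a)$ for $Q_j(a):=a_{(m+j)}c_j\in E_\alpha$.

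Iterating down to $j=0$ produces $Q_N(a)=(-1)^N Q_0(e_\theta^N a)$, where $Q_0(a')=a'_{(m)}b=\Phi(a'\otimes b)=(a'\otimes b)|_{E_\alpha}$. Since $c_N=0$ we also have $Q_N(a)=0$, and therefore $(e_\theta^N a\otimes b)|_{E_\alpha}=0$, which is exactly \eqref{gw}; the case $m<0$ is handled as in the Virasoro proposition, replacing $a_{(m)}b$ by $(T^{(-m-1)}a)\cdot b$ and using \textnormal{iv)}. The main obstacle is the bookkeeping in the second paragraph: one must check carefully, via the conformal grading, that at each stage only the integral term attains order $\lambda^{(m+j)}$ while the contributions of $J_{e_\theta}[a_\lambda c_{j-1}]$ and $(e_\theta a)c_{j-1}$ stay strictly below it, so that the recursion for $Q_j$ is clean. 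A secondary point needing justification is the nonvanishing of $\Phi$ for a nontrivial intertwiner, which I would obtain either from the standard fact that an intertwiner is determined by its leading term or directly from translation covariance and the generation of $L^k_\alpha$ from its top.
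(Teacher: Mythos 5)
Your proposal is correct and follows essentially the same route as the paper's proof: the top-to-top map $a\otimes b\mapsto a_{(m)}b$ is a $\lieg$-morphism (via Lemmas \ref{pro5.1}, \ref{pro5.6} and \ref{lem5.4}), forcing $E_\alpha$ to occur in $E_{\alpha_1}\otimes E_{\alpha_2}$, and then applying $[a_\lambda\,\cdot\,]$ to Kac's singular vector $J_{e_\theta}^{l}b$, expanding with the Wick formula \textnormal{vii)} and the skew-symmetry $[a_\lambda J_{e_\theta}]=-e_\theta a$, and extracting the leading $\lambda^{(m+l)}$-coefficient gives $(-1)^{l}\bigl(e_\theta^{l}a\bigr)_{(m)}b=0$, whence \eqref{gw} by irreducibility of the $\lieg$-action. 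Your recursion $Q_j(a)=-Q_{j-1}(e_\theta a)$ and the grading bookkeeping simply make explicit the paper's one-line observation that the integral term carries the highest power of $\lambda$, so there is nothing substantively different to flag.
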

\begin{proof} Let $ E_1\subset  L^k_{\alpha_1}$, $ E_2\subset  L^k_{\alpha_2}$.  From Lemma \ref{pro5.1},  \ref{pro5.6} and \ref{lem5.4} we have that $\Span\{ a_{(m)}b|a\in E_1, b\in E_2\}=E_{\alpha}\subset L^k_{\alpha}$ an irreducible component of $E_1\otimes E_2$ because of the $\lieg$-action. 



Let $l:=k-(\alpha_2, \theta)+1$.  We have $\forall a\in E_1$ and $b$ the higuest weight of $E_2$
\begin{align*}
0&=[{a}_{\lambda}J_{e_{\theta}}^{l}b]=J_{e_{\theta}}[{a}_{\lambda}J_{e_{\theta}}^{l-1}b]+[a_{\lambda}J_{e_{\theta}}]J_{e_{\theta}}^{l-1}b-\int [{(e_{\theta}a)}_{\lambda}J_{e_{\theta}}^{l-1}b]d\mu
\end{align*}
The last term on the right-hand side has the highest power of $\lambda$. Hence, repeating  the last step $l$-times we found for $m=h-h_{a}-h_{b}-1$ 
\[0=[{a}_{\lambda}J_{e_{\theta}}^{l}b]=\lambda^{(m+l)}(-1)^l((e_{\theta})^{l}a)_{(m)}b +\lambda^{(m+l-1)}\cdots \, .\]
Therefore $((e_{\theta})^{l}a)_{(m)}b=0$. If \eqref{gw} is not satisfied then using the irreducible $\lieg$-action we have $a_{(m)}b=0$ for all $a\in E_1, b\in E_2$. 
\end{proof}

\subsection*{Acknowledgments} I am grateful to T. Arakawa, B. Bakalov,  R. Heluani, L. Topley, J. Van Ekeren by his patient hearing some ideas on this work.  Also, I want to thank T. Creutzig, D. Ridout for explain me some of their results.  This work was done at University of Bath and during at visit to IMPA  on April 2023, I am grateful to this institutions. The author was supported by UK Research and Innovation grant MR/S032657/1.

\bibliographystyle{amsalpha}

\end{document}